\chardef\bslash=`\\ % p. 424, TeXbook
\newtheorem{thm}{Theorem}[section]
\newtheorem{lem}[thm]{Lemma}
\newtheorem{prop}[thm]{Proposition}
\theoremstyle{definition}
\newtheorem{rem}[thm]{Remark}
\theoremstyle{remark}
\newcommand{\eval}[2][\right]{\relax
  \ifx#1\right\relax \left.\fi#2#1\rvert}
\begin{document}
\title{Kissing polytopes}

\author[A. Deza]{Antoine Deza}
\address{McMaster University, Hamilton, Ontario, Canada}
\email{deza@mcmaster.ca} 

\author[S. Onn]{Shmuel Onn}
\address{Technion -- Israel Institute of Technology, Haifa, Israel}
\email{onn@technion.ac.il}

\author[S. Pokutta]{Sebastian Pokutta}
\address{Zuse Institute Berlin, Germany}
\email{pokutta@zib.de}

\author[L. Pournin]{Lionel Pournin}
\address{Universit{\'e} Paris 13, Villetaneuse, France}
\email{lionel.pournin@univ-paris13.fr}

%\date{Received on MONTH, YEAR}
%\issueinfo{VOL}{NUM}{MONTH}{YEAR}
%\doiinfo{10.1007/DOI-NUMBER}
\begin{abstract}
We investigate the following question: \emph{how close can two disjoint lattice polytopes contained in a fixed hypercube be?} This question stems from various contexts where the minimal distance between such polytopes appears in complexity bounds of optimization algorithms. We provide nearly matching lower and upper bounds on this distance and discuss its exact computation. We also give similar bounds in the case of disjoint rational polytopes whose binary encoding length is prescribed.
\end{abstract}
\maketitle

%\tableofcontents

\section{Introduction}\label{DHLOPP.sec.0}

In general, the distance between two disjoint convex bodies $P$ and $Q$ contained in $\mathbb{R}^d$ can get arbitrarily small. However, this is no longer the case when $P$ and $Q$ satisfy certain constraints. For instance, if $P$ and $Q$ are two $d$-dimensional $0/1$\nobreakdash-polytopes, then they cannot be closer than a positive distance that only depends on $d$. This is due to the observation that, when $d$ is fixed, there are finitely many such pairs of polytopes. Another relevant constraint that often arises in optimization algorithms is when $P$ and $Q$ are rational polytopes whose binary encoding length (as subsets of $\mathbb{R}^d$ satisfying a set of linear inequalities) is prescribed. Here, again, the smallest possible distance between $P$ and $Q$ is a positive number that depends on that encoding length and on $d$. Our goal is to estimate these minimal distances.

Our study stems from the complexity bounds established by G{\'a}bor Braun, Sebastian Pokutta, and Robert Weismantel~\cite{BraunPokuttaWeismantel2022}. They provide an algorithm that computes a point in $P\cap{Q}$ when that intersection is non-empty or certifies that $P\cap{Q}$ is empty. In the latter case, the number of calls to a linear optimization oracle over $P$ and $Q$ required to certify that $P\cap{Q}$ is empty is
$$
O\Biggl(\frac{1}{d(P,Q)^2}\Biggr)
$$
and therefore, it is natural to ask how small $d(P,Q)$ can get.  

%\subsection*{Related work} We want to briefly discuss works and notions that are most closely related to our work. 
Our study is related to the notion of \emph{facial distance} considered by Javier Pe{\~n}a and Daniel Rodriguez~\cite[Section~2]{PenaRodriguez2018} and David Gutman and Javier Pe{\~n}a~\cite{GutmanPena2018, Pena2019}. It is also linked to the \emph{vertex-facet distance} of a polytope investigated by Amir Beck and Shimrit Shtern \cite{BeckShtern2017} and to the closely related \emph{pyramidal width} studied by Simon Lacoste-Julien and Martin Jaggi~\cite{Lacoste-JulienJaggi2015} and by Luis Rademacher and Chang Shu~\cite{RademacherShu2022}. We refer the reader to the survey by G{\'a}bor Braun, Alejandro Carderera, Cyrille Combettes, Hamed Hassani, Amin Karbasi, Aryan Mokhtari, and Sebastian Pokutta~\cite{BraunCardereraCombettesHassaniKarbasiMokhtariPokutta2022} for an overview of these notions.

The facial distance is crucial in establishing linear convergence rates for conditional gradient methods over polytopes and naturally occurs in the complexity bounds. The facial distance of a polytope $P$ is defined as 
\begin{equation}\label{eq:fd}
\Phi(P)=\min\Bigl\{d\bigl(F,\mathrm{conv}(\mathcal{V} \setminus F)\bigr):F\in\mathcal{F}\Bigr\},
\end{equation}
where $\mathcal{V}$ denotes the vertex set of $P$ and $\mathcal{F}$ the set of its proper faces. In other words, the facial distance of $P$ is the minimal distance between any of its faces and the convex hull of its vertices not contained in that face. In contrast to our study, this notion considers a specific polytope $P$ and decomposes it into its faces and their complements. %For $0/1$-polytopes the facial distance is lower bounded by $\Omega(1/\sqrt{d})$ (see discussion in \cite[Appendix B.1]{Lacoste-JulienJaggi2015} together with \cite{PenaRodriguez2018})\todo{SP: looking for a direct reference}.
The vertex-facet distance is measured in the special case when $F$ is a facet of the considered polytope and is replaced in \eqref{eq:fd} with its affine hull. It can then be expressed as
\begin{equation}\label{eq:vfd}
\Delta(P)=\min\Bigl\{d\bigl(\mathrm{aff}(F),\mathrm{conv}(\mathcal{V} \setminus F)\bigr):F\in\overline{\mathcal{F}}\Bigr\},
\end{equation}
where $\overline{\mathcal{F}}$ is the set of the facets of $P$, as shown in \cite[Section 2]{PenaRodriguez2018}. Bounds have been given on the smallest possible vertex-facet distance of $0/1$-simplices \cite{AlonVu1997,GrahamSloane1984}. In particular, Noga Alon and V\u{a}n V{\~u} show~\cite[Theorem 3.2.2]{AlonVu1997} that
%the smallest possible distance $\delta$ between a vertex of a $d$-dimensional $0/1$-simplex and the hyperplane spanned by the opposite facet satisfies
\begin{equation}\label{DHLOPP.sec.0.eq.1}
\frac{1}{\sqrt{2}^{d\,\mathrm{log}\,d-2d+o(d)}}\leq\min\,\Delta(S)\leq\frac{1}{\sqrt{2}^{d\,\mathrm{log}\,d-4d+o(d)}}
\end{equation}
where the minimum is over all the $d$-dimensional $0/1$-simplices $S$.

%Let us briefly highlight other articles that are relevant to our study, though they address different aspects of the problem.

The results of Stephen Vavasis on the complexity of quadratic optimization~\cite{Vavasis1990}, generalized by Alberto Del Pia, Santanu Dey, and Marco Molinaro in~\cite{DelPiaDeyMolinaro2017} imply as a special case that the squared distance between two rational polytopes is a rational number. Our work is concerned by providing bounds on how close such polytopes can be under the mentioned constraints. 

Recall that a polytope whose vertices belong to the integer lattice $\mathbb{Z}^d$ is a \emph{lattice polytope}. We will refer to a lattice polytope contained in the hypercube $[0,k]^d$ as a \emph{lattice $(d,k)$-polytope}. In this article, we first provide a lower bound, as a function of $d$ and $k$ on the smallest possible distance between two disjoint lattice $(d,k)$-polytopes and then we complement these lower bounds with constructions that provide almost matching upper bounds. 

In terms of lower bounds our main result is the following.

\begin{thm}\label{DHLOPP.sec.0.thm.1}
If $P$ and $Q$ are disjoint lattice $(d,k)$-polytopes, then 
$$
d(P,Q)\geq\frac{1}{(kd)^{2d}}\mbox{.}
$$
\end{thm}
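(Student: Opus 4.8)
The plan is to reduce the statement to a lower bound on the distance from the origin to a single lattice polytope, and then to estimate that distance by comparing two simplex volumes. First I would pass to the Minkowski difference $R=P-Q=\{p-q:p\in P,\ q\in Q\}$. This is a polytope, each of its vertices is a difference of a vertex of $P$ and a vertex of $Q$ and therefore lies in $\{-k,\dots,k\}^d$, one has $d(P,Q)=d(0,R)$, and the hypothesis $P\cap Q=\emptyset$ is exactly the statement that $0\notin R$. Hence it suffices to show that $d(0,R)\geq(2k\sqrt d)^{-(d-1)}$ for every lattice polytope $R\subseteq[-k,k]^d$ with $0\notin R$; a routine inequality gives $(2k\sqrt d)^{d-1}\leq(kd)^{2d}$ for all $k,d\geq1$, so this implies, with room to spare, the bound claimed in the theorem.

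To bound $d(0,R)$, let $r$ be the point of $R$ closest to the origin and use Carath\'eodory's theorem, after trimming redundant vertices, to write $r$ as a convex combination with strictly positive coefficients of affinely independent vertices $w_0,\dots,w_j$ of $R$. Then $r$ lies in the relative interior of the simplex $S=\mathrm{conv}(w_0,\dots,w_j)$; since $r$ is also the point of $S$ nearest the origin, it must be the orthogonal projection of the origin onto $\mathrm{aff}(S)$, for otherwise one could strictly decrease $\|r\|$ by nudging $r$ a little inside $S$ toward that projection. In particular $0\notin\mathrm{aff}(S)$, so $0,w_0,\dots,w_j$ are affinely independent; equivalently $w_0,\dots,w_j$ are \emph{linearly} independent, and in particular $j\leq d-1$. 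The only genuinely delicate point of the argument is this one, because the linear independence just extracted is precisely what later turns a Gram determinant into a positive integer.

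It then remains to recognize $d(0,R)=\|r\|$ as the height, from the apex $0$, of the $(j+1)$-dimensional simplex $T=\mathrm{conv}(0,w_0,\dots,w_j)$ over its facet $S$:
\[
d(0,R)=\frac{(j+1)\,\mathrm{vol}_{j+1}(T)}{\mathrm{vol}_{j}(S)}.
\]
For the numerator, if $M$ denotes the $(j+1)\times d$ matrix with rows $w_0,\dots,w_j$, then $\mathrm{vol}_{j+1}(T)=\frac{1}{(j+1)!}\sqrt{\det(MM^{\mathsf{T}})}$, and since the $w_i$ are linearly independent integer vectors, $MM^{\mathsf{T}}$ is a positive definite integer matrix, whence $\det(MM^{\mathsf{T}})\geq1$ and $\mathrm{vol}_{j+1}(T)\geq\frac{1}{(j+1)!}$. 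For the denominator, Hadamard's inequality applied to the matrix with rows $w_1-w_0,\dots,w_j-w_0$ gives $\mathrm{vol}_{j}(S)\leq\frac{1}{j!}\prod_{i=1}^{j}\|w_i-w_0\|\leq\frac{1}{j!}(2k\sqrt d)^{j}$, the last step because every coordinate of $w_i-w_0$ lies in $\{-2k,\dots,2k\}$. Dividing these two bounds yields $d(0,R)\geq(2k\sqrt d)^{-j}\geq(2k\sqrt d)^{-(d-1)}$, which finishes the argument together with the elementary comparison noted above; the degenerate case $j=0$, and the extreme case $k=d=1$, are consistent with all the conventions used.
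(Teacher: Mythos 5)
Your argument is correct and, while it shares ingredients with the paper's, its overall structure is genuinely different. The paper keeps $P$ and $Q$ separate: it identifies the faces $f_P$, $f_Q$ containing the nearest points in their relative interiors, picks a spanning family $w^1,\dots,w^r$ of the combined tangent directions together with $w^0=u^0-v^0$, and solves the Gram system $M\beta=b$ via Cramer's rule so as to exhibit a nonzero scalar multiple $a$ of $p-q$ with integer coordinates (Lemma~\ref{DPP.sec.0.lem.1}); then $d(P,Q)=(w^0\cdot a)/\|a\|\geq 1/\|a\|$ and $\|a\|$ is controlled by Hadamard's inequality, yielding Theorem~\ref{DHLOPP.sec.1.thm.1}, of which Theorem~\ref{DHLOPP.sec.0.thm.1} is a weakening. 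You instead collapse the problem to a single lattice polytope via the Minkowski difference $R=P-Q\subset[-k,k]^d$ and phrase the bound as a volume ratio for the simplex $T=\mathrm{conv}(0,w_0,\dots,w_j)$ over its facet $S$: integrality of the Gram determinant gives $\mathrm{vol}_{j+1}(T)\geq 1/(j+1)!$, and Hadamard caps $\mathrm{vol}_j(S)$. Both proofs ultimately rest on the same two pillars --- a positive integer determinant is at least $1$, and Hadamard bounds the size of the determinant that appears --- but your route is shorter, more geometric, and in fact yields a somewhat stronger inequality, $d(P,Q)\geq(2k\sqrt d\,)^{-(d-1)}$ versus the paper's $k^{-(2d-1)}d^{-(3d+2)/2}$. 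What the paper's algebraic setup buys in exchange is reuse: the explicit vector $a$ from Lemma~\ref{DPP.sec.0.lem.1} is precisely the object whose binary encoding length is tracked in the rational case (Theorem~\ref{DHLOPP.sec.1.thm.4}), whereas your Minkowski-difference reduction would be less convenient there, since differencing $P$ and $Q$ perturbs the vertex and facet complexities in a less directly controllable way.
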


We shall in fact prove a stronger bound (see Theorem \ref{DHLOPP.sec.1.thm.1}) which Theorem~\ref{DHLOPP.sec.0.thm.1} is a consequence of. We also prove a lower bound on the distance of two rational polytopes in terms of the dimension and their binary encoding length (see Theorem~\ref{DHLOPP.sec.1.thm.4}). Our main result regarding upper bounds in the following. 

\begin{thm}\label{DHLOPP.sec.0.thm.2}
Consider a positive integer $k$. For any large enough $d$, there exist two disjoint $(d,k)$-lattice polytopes $P$ and $Q$ such that
$$
d(P,Q)\leq\frac{1}{\bigl(k\sqrt{d}\bigr)^{\!\!\sqrt{d}}}\mbox{.}
$$
\end{thm}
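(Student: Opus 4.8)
The plan is to exhibit an explicit pair of disjoint lattice $(d,k)$-polytopes that are extremely close, by reducing to the known $0/1$-simplex constructions and then scaling. The key observation is that Theorem~\ref{DHLOPP.sec.0.thm.2} has the upper bound $1/(k\sqrt d)^{\sqrt d}$, whose exponent $\sqrt d$ is much smaller than the $d\log d$ appearing in \eqref{DHLOPP.sec.0.eq.1}; so we should not expect to need the full strength of the Alon--V\~u construction. Instead, I would aim for a more robust, lower-dimensional gadget: take a carefully chosen lattice simplex $S$ in $[0,k']^{d'}$ for some $d' \leq d$ and $k' \leq k$ whose vertex-facet distance $\Delta(S)$ is tiny, let $P$ be a facet of $S$ together with a thin lattice polytope on one side of the separating hyperplane, and let $Q$ be the remaining vertex (or vertices) of $S$. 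The distance $d(P,Q)$ is then controlled by $\Delta(S)$ together with the geometry near that facet.

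First I would recall or re-derive the quantitative version of the $0/1$-simplex bound: for each $m$ there is an $m$-dimensional $0/1$-simplex $S_m$ with $\Delta(S_m)$ at most roughly $1/\sqrt{2}^{\,m\log m}$, witnessed by the Hadamard-type construction in \cite{AlonVu1997,GrahamSloane1984}. Next, I would scale: placing $S_m$ inside a $k$-dilated cube multiplies all vertices by $k$, which multiplies the distance between a facet hyperplane and the opposite vertex by $k$, but — crucially — also lets us take $m$ as large as possible relative to $d$ while keeping the construction inside $[0,k]^d$. By embedding the $m$-dimensional gadget into $\mathbb{R}^d$ (padding with zero coordinates) and choosing $m \approx \sqrt d$, the factor $k$ enters with exponent $m \approx \sqrt d$ as well, and a short computation should turn the bound $1/\sqrt{2}^{\,m\log m}$ with $m=\sqrt d$ into something at most $1/(k\sqrt d)^{\sqrt d}$ for all large $d$. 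The inequality $\sqrt d \log \sqrt d = \tfrac12 \sqrt d \log d$ versus $\sqrt d \log(k\sqrt d) = \sqrt d\log k + \tfrac12 \sqrt d \log d$ shows there is enough slack, since the exponent from the $0/1$-construction is quadratic in $m$ in the exponent base $\sqrt 2$, i.e.\ of order $m \log m$, which dwarfs $\sqrt d \log(k\sqrt d)$ once $m$ is a large constant times $\sqrt d$; one then tunes the constant.

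The two polytopes themselves I would take as follows: let $S$ be the scaled, padded simplex, let $F$ be the facet of $S$ realizing $\Delta(S)$, set $Q = \{v\}$ where $v$ is the vertex of $S$ opposite $F$ (a single lattice point, hence a lattice $(d,k)$-polytope), and set $P = \mathrm{conv}(\mathcal V(S)\setminus\{v\})$, which is the facet $F$ itself and is a lattice polytope disjoint from $Q$. Then $d(P,Q) = d(v, F) \leq d(v, \mathrm{aff}(F)) = \Delta(S)$ by definition \eqref{eq:vfd}, so the whole problem collapses to bounding $\Delta$ of the scaled gadget, which is the computation sketched above. I would double-check that $P$ and $Q$ are genuinely disjoint — this is immediate since $v$ is a vertex of the simplex $S$ and $F$ is the opposite facet, so $v \notin \mathrm{aff}(F) \supseteq F$.

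The main obstacle I anticipate is bookkeeping the scaling and embedding cleanly: the Alon--V\~u bound is stated asymptotically with an $o(d)$ term in the exponent, so I must be careful that, after substituting $m \approx \sqrt d$, the $o(m)=o(\sqrt d)$ error does not swamp the $\sqrt d \log k$ term we are trying to beat. This should be fine because the dominant term $\tfrac12 m\log m$ has a definite positive coefficient while the target exponent $\sqrt d\log(k\sqrt d)$ grows only like $\tfrac12\sqrt d\log d$ plus a lower-order $\sqrt d\log k$; choosing $m = c\sqrt d$ with $c$ a sufficiently large constant (depending on nothing, or mildly on $k$) gives $\tfrac12 c\sqrt d\log(c\sqrt d) \geq \sqrt d\log(k\sqrt d)$ for all large $d$, and the $o(\sqrt d)$ term is absorbed. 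A secondary point to verify is that the construction genuinely fits in $[0,k]^d$: the $0/1$-simplex has vertices in $\{0,1\}^m$, so after dilation by $k$ and padding it lies in $\{0,k\}^m\times\{0\}^{d-m}\subseteq[0,k]^d$, which is fine provided $m \leq d$, i.e.\ $c\sqrt d \leq d$, true for large $d$.
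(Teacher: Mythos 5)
Your reduction hinges on the inequality $d(v,F)\leq d\bigl(v,\mathrm{aff}(F)\bigr)=\Delta(S)$, and this is where the argument breaks down: since $F\subseteq\mathrm{aff}(F)$, the inequality in fact goes the other way, $d(v,F)\geq d\bigl(v,\mathrm{aff}(F)\bigr)$. The vertex-facet distance $\Delta(S)$ is the distance from the opposite vertex to the \emph{hyperplane} spanned by the facet, and the Alon--V\~u bound makes this tiny precisely by producing a very skewed simplex; but the foot of the perpendicular from $v$ onto $\mathrm{aff}(F)$ need not lie inside $F$, and for such nearly-degenerate simplices it generally does not, so $d(v,F)$ can be far larger than $\Delta(S)$. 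The paper explicitly cautions against exactly this deduction: the remark following Theorem~\ref{DHLOPP.sec.0.thm.3} states that the upper bound in \eqref{DHLOPP.sec.0.eq.2} cannot be derived from the upper bound in \eqref{DHLOPP.sec.0.eq.1}, because $\Delta$ is a point-to-hyperplane distance while $\Phi$ (and $d(P,Q)$) are distances between bounded polytopes. So the Alon--V\~u result does not, by itself, yield two disjoint lattice polytopes at small Euclidean distance, and your proof does not go through as written.

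A secondary confusion is the role of $k$: dilating a $0/1$-simplex by $k$ scales every distance up by $k$, so it cannot help, and it does not enlarge the admissible $m$ either (the constraint $m\leq d$ is independent of the scaling); $k$ would enter only as a single multiplicative factor, not as $k^{\sqrt d}$. Your asymptotic bookkeeping would in fact close the gap over $(k\sqrt d)^{-\sqrt d}$ if the reduction were sound, but without $d(v,F)\leq\Delta(S)$ there is nothing to bookkeep. The paper's actual construction (Section~\ref{DHLOPP.sec.2}) is genuinely different: it fixes an integer vector $\overline{a}$ built from a geometric progression with ratio $k(\delta-1)$ and repeated coordinates, takes $P$ and $Q$ to be the convex hulls of the lattice points of $[0,k]^d$ on the parallel hyperplanes $\overline{a}\cdot x=0$ and $\overline{a}\cdot x=1$, and then exhibits explicit convex combinations $p\in P$ and $q\in Q$ whose difference has coordinates of size roughly $\bigl(k(\delta-1)\bigr)^{-\sigma}$ (see Propositions~\ref{DHLOPP.sec.2.eq.2.prop.1}--\ref{DHLOPP.sec.2.eq.2.prop.3} and Theorem~\ref{DHLOPP.sec.2.thm.2}). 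This makes essential use of $k$ through the geometric ratio and directly produces two polytopes at small distance, instead of trying to upgrade a small point-to-hyperplane distance into a small point-to-facet distance.
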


As above, Theorem \ref{DHLOPP.sec.0.thm.2} follows from a stronger bound (see Theorem \ref{DHLOPP.sec.2.thm.1}). We also give an upper bound on the smallest possible distance between two rational polytopes whose binary encoding length is prescribed (see Theorem \ref{DHLOPP.sec.2.thm.3}). 

By its definition, the facial distance of a polytope is a distance between two polytopes. Inversely, the distance between two polytopes $P$ and $Q$ is the distance between two of their faces that belong to parallel hyperplanes. In particular, $d(P,Q)$ is at least the facial distance of the convex hull of these two faces. As a consequence, our results provide bounds on the smallest possible facial distance of a lattice $(d,k)$-polytope in terms of $d$ and $k$.

\begin{thm}\label{DHLOPP.sec.0.thm.3}
For any positive $k$ and large enough $d$,
\begin{equation}\label{DHLOPP.sec.0.eq.2}
\frac{1}{(kd)^{2d}}\leq\mathrm{min}\,\Phi(P)\leq\frac{1}{\bigl(k\sqrt{d}\bigr)^{\!\!\sqrt{d}}}
\end{equation}
where the minimum is over all the lattice $(d,k)$-polytopes $P$.
\end{thm}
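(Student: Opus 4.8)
The plan is to obtain both inequalities in~\eqref{DHLOPP.sec.0.eq.2} from Theorems~\ref{DHLOPP.sec.0.thm.1} and~\ref{DHLOPP.sec.0.thm.2} by exploiting the two elementary observations recalled just before the statement: each term in the minimum defining $\Phi(P)$ is the distance between two disjoint lattice $(d,k)$-polytopes, and, conversely, the distance between two disjoint lattice polytopes equals the distance between two of their faces lying in parallel hyperplanes, hence is at least the facial distance of the convex hull of those two faces.

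\textbf{Lower bound.} Fix a lattice $(d,k)$-polytope $P$ with vertex set $\mathcal{V}$, and let $F$ be a nonempty proper face of $P$, written $F=P\cap H$ with $H=\{x:\langle a,x\rangle=\beta\}$ a supporting hyperplane and $P\subseteq\{x:\langle a,x\rangle\leq\beta\}$. As a face of $P$, the polytope $F$ is a lattice $(d,k)$-polytope. Since $F$ is proper, $\mathcal{V}\setminus F$ is nonempty (otherwise $P=\mathrm{conv}(\mathcal{V})=F$), so $\mathrm{conv}(\mathcal{V}\setminus F)$ is a lattice $(d,k)$-polytope; and every $v\in\mathcal{V}\setminus F$ satisfies $\langle a,v\rangle<\beta$, so $\mathrm{conv}(\mathcal{V}\setminus F)$ lies in the open half-space $\{x:\langle a,x\rangle<\beta\}$ and is disjoint from $F$. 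By Theorem~\ref{DHLOPP.sec.0.thm.1}, $d\bigl(F,\mathrm{conv}(\mathcal{V}\setminus F)\bigr)\geq(kd)^{-2d}$. As the empty face (if counted among the proper faces) yields an infinite term that may be ignored, taking the minimum over all proper faces $F$ gives $\Phi(P)\geq(kd)^{-2d}$.

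\textbf{Upper bound.} Let $d$ be large enough that Theorem~\ref{DHLOPP.sec.0.thm.2} applies, and pick disjoint lattice $(d,k)$-polytopes $P_0$ and $Q_0$ with $d(P_0,Q_0)\leq(k\sqrt d)^{-\sqrt d}$; let $x^*\in P_0$ and $y^*\in Q_0$ realize this distance and set $a=y^*-x^*$. Then $H_P=\{x:\langle a,x\rangle=\langle a,x^*\rangle\}$ supports $P_0$ at $x^*$, $H_Q=\{x:\langle a,x\rangle=\langle a,y^*\rangle\}$ supports $Q_0$ at $y^*$, the two hyperplanes are parallel, and they are distinct since $\langle a,y^*\rangle-\langle a,x^*\rangle=\langle a,a\rangle>0$. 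Hence $F_P=P_0\cap H_P$ and $F_Q=Q_0\cap H_Q$ are nonempty faces lying in distinct parallel hyperplanes, and $R=\mathrm{conv}(F_P\cup F_Q)$ is a lattice $(d,k)$-polytope. One checks that $\langle a,\cdot\rangle$ attains its maximum over $R$ exactly on $F_P$, so $F_P$ is a proper face of $R$, and that the vertex set of $R$ is the disjoint union of those of $F_P$ and $F_Q$, which gives $\mathrm{conv}(\mathcal{V}_R\setminus F_P)=F_Q$. Therefore $\Phi(R)\leq d(F_P,F_Q)$, and since $x^*\in F_P\subseteq P_0$ and $y^*\in F_Q\subseteq Q_0$ we have $d(F_P,F_Q)=d(P_0,Q_0)\leq(k\sqrt d)^{-\sqrt d}$, so $\min\Phi(P)\leq(k\sqrt d)^{-\sqrt d}$.

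The argument is largely bookkeeping, so the only delicate point — and where a gap could hide — is the structural claim in the last paragraph: that for faces $F_P,F_Q$ in parallel, non-coincident hyperplanes the functional $\langle a,\cdot\rangle$ singles out $F_P$ as a face of $\mathrm{conv}(F_P\cup F_Q)$ and $\mathcal{V}_R\setminus F_P$ is exactly the vertex set of $F_Q$ (using that the vertices of a convex hull of finitely many polytopes lie among the vertices of those polytopes). I would isolate this as a short lemma on $\mathrm{conv}(F_P\cup F_Q)$ before assembling the two bounds.
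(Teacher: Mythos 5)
Your proposal is correct and takes exactly the route the paper sketches in the paragraph preceding the theorem: each term in $\Phi(P)$ is a distance between two disjoint lattice $(d,k)$-polytopes (giving the lower bound from Theorem~\ref{DHLOPP.sec.0.thm.1}), and conversely $d(P_0,Q_0)$ dominates the facial distance of the convex hull of the two parallel extremal faces (giving the upper bound from Theorem~\ref{DHLOPP.sec.0.thm.2}); the paper states these two observations and leaves the bookkeeping to the reader, which is precisely what you carry out. One minor slip: over $R=\mathrm{conv}(F_P\cup F_Q)$ the functional $\langle a,\cdot\rangle$ with $a=y^*-x^*$ attains its \emph{minimum}, not its maximum, on $F_P$ (its maximum is on $F_Q$), but this does not affect the conclusion that $F_P$ is a proper face of $R$.
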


Similar bounds in the case of rational polytopes, in terms of their dimension and binary encoding length follow from Theorems \ref{DHLOPP.sec.1.thm.4} and \ref{DHLOPP.sec.2.thm.3}. %While (\ref{DHLOPP.sec.0.eq.1}) and (\ref{DHLOPP.sec.0.eq.2}) look similar, neither of these implies the other as $\Delta(P)$ is the distance between a point and a hyperplane and $\Phi(P)$ between two polytopes of arbitrary dimensions.

%\begin{rem}
%While  (\ref{DHLOPP.sec.0.eq.1}) and (\ref{DHLOPP.sec.0.eq.2}) have similar forms, neither of these statements implies the other. Indeed, on the one hand $\Delta(P)$ is the distance between a point and a hyperplane and on the other, $\Phi(P)$ is the distance between two polytopes of arbitrary dimensions. More precisely, the lower bound in (\ref{DHLOPP.sec.0.eq.2}) cannot be deduced from that in (\ref{DHLOPP.sec.0.eq.1}) and the same holds for the upper bounds.
%\end{rem}

We establish the announced lower bounds for lattice polytopes in Section~\ref{DHLOPP.sec.1}. The upper bounds and the corresponding constructions for lattice polytopes are provided in Section~\ref{DHLOPP.sec.2}. These upper bounds are only valid for all sufficiently large dimensions and we provide bounds in Section~\ref{DHLOPP.sec.3} that hold in all dimensions. In the same section, we study the smallest possible distance of two lattice polytopes whose dimension is fixed independently on the dimension of the ambient space. Section~\ref{DHLOPP.sec.4} contains computational results. We report in that section the exact value of the smallest possible distance between disjoint lattice $(d,k)$\nobreakdash-polytopes for certain $d$ and $k$ (see Table \ref{DHLOPP.sec.4.table.1}). In order to compute these distances, we prove in Section~\ref{DHLOPP.sec.4} that one can restrict to considering a well-behaved subset of the pairs of lattice $(d,k)$-polytopes. We end the article with Section~\ref{DHLOPP.sec.1.5} where our lower and upper bounds are given on the smallest possible distance of two rational polytopes in terms of their binary encoding length.

\section{Lower bounds}\label{DHLOPP.sec.1}

In this section $P$ and $Q$ are two fixed, disjoint polytopes contained in $\mathbb{R}^d$ and our goal is to prove Theorem~\ref{DHLOPP.sec.0.thm.1}. Let us first introduce some notations and give a few remarks. Since $P$ and $Q$ are compact subsets of $\mathbb{R}^d$, there exists a point $p$ in $P$ and a point $q$ in $Q$ whose distance is equal to $d(P,Q)$. Let $f_P$ denote the unique face of $P$ that contains $p$ in its relative interior and $f_Q$ the unique face of $Q$ that contains $q$ in its relative interior. We remark that $f_P$ and $f_Q$ are contained in two parallel hyperplanes orthogonal to $p-q$. This situation is illustrated in Figure~\ref{DHLOPP.sec.1.fig.1}, where $P$ and $Q$ are two $0/1$-polytopes, $f_P$ is the diagonal of the cube, and $f_Q$ a diagonal of one of its square faces.

We now consider $\mathrm{dim}(f_P)+1$ vertices of $f_P$, that we label by $u^0$ to $u^{\mathrm{dim}(f_P)}$ such that the vectors $u^1-u^0$ to $u^{\mathrm{dim}(f_P)}-u^0$ are linearly independent. Similarly, pick a family $v^0$ to $v^{\mathrm{dim}(f_Q)}$ of vertices of $f_Q$ such that the vectors $v^1-v^0$ to $v^{\mathrm{dim}(f_Q)}-v^0$ are linearly independent. Consider the set 
$$
S=\Bigl\{u^i-u^0:1\leq{i}\leq\mathrm{dim}(f_P)\Bigr\}\cup\Bigl\{v^i-v^0:1\leq{i}\leq\mathrm{dim}(f_Q)\Bigr\}
$$
and extract from it a subset of linearly independent vectors $w^1$ to $w^r$ that span the same subspace of $\mathbb{R}^d$ than $S$. Observe that $r$ is at most $d-1$ because these vectors are linearly independent and all of them are orthogonal to $p-q$. Further denote by $w^0$ the difference $u^0-v^0$. 
\begin{figure}[t]
\begin{centering}
\includegraphics[scale=1]{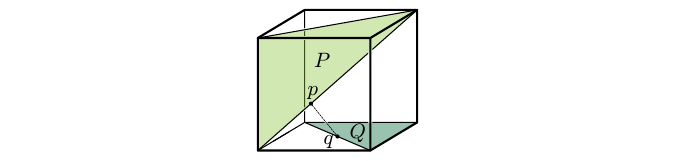}
\caption{Two $0/1$-polytopes $P$ and $Q$ and points $p$ and $q$ such that $d(P,Q)$ is equal to $d(p,q)$.}\label{DHLOPP.sec.1.fig.1}
\end{centering}
\end{figure}
Since $f_P$ and $f_Q$ are contained into two parallel hyperplanes orthogonal to $p-q$, the scalar product $(x-y)\mathord{\cdot}(p-q)$, where $x$ belongs to $f_P$ and $y$ to $f_Q$ does not depend on which points $x$ and $y$ are chosen in $f_P$ and in $f_Q$. As a consequence, the equality
$$
d(p,q)=\frac{(p-q)\mathord{\cdot} (p-q)}{\|p-q\|}
$$
can be rewritten into
\begin{equation}\label{DPP.sec.1.equation.1}
d(p,q)=w^0\mathord{\cdot}\frac{(p-q)}{\|p-q\|}\mbox{.}
\end{equation}

We will express the quotient in the right-hand side of (\ref{DPP.sec.1.equation.1}) using the vectors $w^i$. In order to do that, consider the $r\mathord{\times}r$ matrix $M$ whose term in row $i$ and column $j$ is $w^i\mathord{\cdot}w^j$, the column vector $b$ whose coordinates are $w^0\mathord{\cdot}w^1$ to $w^0\mathord{\cdot}w^r$, and the matrix $M_i$ obtained from $M$ by replacing column $i$ with $b$.

\begin{lem}\label{DPP.sec.0.lem.1}
The distance between $P$ and $Q$ satisfies
$$
d(P,Q)=w^0\mathord{\cdot}\frac{a}{\|a\|}
$$
where
$$
a=\mathrm{det}(M)w^0-\sum_{i=1}^r\mathrm{det}(M_i)w^i\mbox{.}
$$
\end{lem}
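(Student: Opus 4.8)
The plan is to prove that the vector $a$ in the statement is a positive scalar multiple of $p-q$; the asserted formula is then immediate from (\ref{DPP.sec.1.equation.1}) together with the equality $d(P,Q)=d(p,q)$.

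The crux is the observation that $p-q$ is orthogonal to each of $w^1,\dots,w^r$. To see this, let $w$ be any vector in the linear span of $\{u^i-u^0:1\le i\le\mathrm{dim}(f_P)\}$. Since $p$ lies in the relative interior of $f_P$, the point $p+tw$ belongs to $f_P\subseteq P$ for all $t$ with $|t|$ small enough, so $\|(p+tw)-q\|\ge d(p+tw,Q)\ge d(P,Q)=\|p-q\|$ for all such $t$; the quadratic $t\mapsto\|(p+tw)-q\|^2$ thus attains a minimum at the interior point $t=0$, whence $(p-q)\cdot w=0$. Applying the same argument to $q$ shows that $p-q$ is also orthogonal to the span of $\{v^i-v^0:1\le i\le\mathrm{dim}(f_Q)\}$, and since $w^1,\dots,w^r$ span the same subspace as $S$, we obtain $(p-q)\cdot w^j=0$ for all $j\in\{1,\dots,r\}$. (Incidentally, this is also what makes it legitimate to rewrite $d(p,q)=\bigl((p-q)\cdot(p-q)\bigr)/\|p-q\|$ as (\ref{DPP.sec.1.equation.1}).)

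Now, as $p$ belongs to the affine hull of $u^0,\dots,u^{\mathrm{dim}(f_P)}$ and $q$ to that of $v^0,\dots,v^{\mathrm{dim}(f_Q)}$, the difference $p-q$ lies in $w^0+\mathrm{span}(w^1,\dots,w^r)$, so there are scalars $\beta_1,\dots,\beta_r$ with $p-q=w^0-\sum_{i=1}^r\beta_iw^i$. Taking the dot product of this identity with $w^j$ for each $j\in\{1,\dots,r\}$ and using the orthogonality relations above yields precisely the linear system $M\beta=b$, with $M$, $b$, and $\beta$ as defined in the excerpt. Because $w^1,\dots,w^r$ are linearly independent, $M$ is the Gram matrix of an independent family, hence symmetric positive definite, so $\mathrm{det}(M)>0$ and Cramer's rule gives $\beta_i=\mathrm{det}(M_i)/\mathrm{det}(M)$ for every $i$. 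Substituting back,
$$
a=\mathrm{det}(M)w^0-\sum_{i=1}^r\mathrm{det}(M_i)w^i=\mathrm{det}(M)(p-q)\mbox{,}
$$
so $a/\|a\|=(p-q)/\|p-q\|$, and plugging this into (\ref{DPP.sec.1.equation.1}) finishes the proof; the degenerate case $r=0$, in which $f_P$ and $f_Q$ are single vertices and $a=w^0=p-q$, is trivial. I expect the only delicate point to be the first-order optimality argument in the second paragraph, where it is essential that $p$ and $q$ lie in the \emph{relative} interiors of $f_P$ and $f_Q$ so that perturbations in both directions along their affine hulls are admissible; everything after that is routine linear algebra (writing $p-q$ in the adapted family, recognizing the Gram system, and applying Cramer's rule).
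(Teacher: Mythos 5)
Your proof is essentially the same as the paper's: express $p-q$ in the adapted family $w^0,\dots,w^r$, use the orthogonality of $p-q$ to $w^1,\dots,w^r$ to obtain the Gram system $M\beta=b$, and apply Cramer's rule. You go a bit further than the paper by explicitly supplying the first-order optimality argument for the orthogonality relations (which the paper takes as given), by noting directly that the $w^0$-coefficient of $p-q$ equals $1$ rather than introducing the auxiliary scalar $\lambda$, and by pointing out that $\det(M)>0$ (positive definiteness of the Gram matrix) so that $a$ is a \emph{positive} multiple of $p-q$ — a sign check the paper leaves implicit.
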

\begin{proof}
Observe that $p-q$ belongs to the space spanned by vectors $w^0$ to $w^r$. Hence, there exist $r+1$ coefficients $\alpha_0$ to $\alpha_r$ such that
\begin{equation}\label{DPP.sec.0.lem.1.eq.1}
p-q=\sum_{i=0}^r\alpha_iw^i\mbox{.}
 \end{equation}
 
Let $j$ be an integer such that $1\leq{j}\leq{r}$. As $w^j$ is orthogonal to $p-q$,
\begin{equation}\label{DPP.sec.0.lem.1.eq.2}
\sum_{i=0}^r\alpha_i(w^i\mathord{\cdot}w^j)=0\mbox{.}
\end{equation}

Since $p-q$ is non-zero and orthogonal to the vectors $w^1$ to $w^r$, it cannot be a linear combination of these vectors. It immediately follows that $\alpha_0$ is non-zero and we can denote, for each integer $i$ such that $1\leq{i}\leq{r}$,
\begin{equation}\label{DPP.sec.0.lem.1.eq.2.5}
\beta_i=-\frac{\alpha_i}{\alpha_0}\mbox{.}
\end{equation}

With this notation, (\ref{DPP.sec.0.lem.1.eq.2}) can be rewritten into
$$
\sum_{i=1}^r\beta_i(w^i\mathord{\cdot}w^j)=w^0\mathord{\cdot}w^j
$$
and the linear system obtained by letting $j$ range between $1$ and $r$ is
$$
M\beta=b\mbox{,}
$$
where $\beta$ is the column vector whose coordinates are $\beta_1$ to $\beta_r$. Observe that $M$ has rank $r$ since the vectors $w^i$ are linearly independent and that its determinant is therefore non-zero. As a consequence, according to Cramer's rule,
$$
\beta_i=\frac{\mathrm{det}(M_i)}{\mathrm{det}(M)}
$$
and, by (\ref{DPP.sec.0.lem.1.eq.2.5}), one can rewrite (\ref{DPP.sec.0.lem.1.eq.1}) into
\begin{equation}\label{DPP.sec.0.lem.1.eq.3}
\lambda(p-q)=\mathrm{det}(M)w^0-\sum_{i=0}^r\mathrm{det}(M_i)w^i\mbox{,}
\end{equation}
where
$$
\lambda=\frac{\mathrm{det}(M)}{\alpha_0}\mbox{.}
$$

Finally observe that (\ref{DPP.sec.1.equation.1}) can be rewritten into
$$
d(p,q)=w^0\mathord{\cdot}\frac{\lambda(p-q)}{\|\lambda(p-q)\|}\mbox{.}
$$

Combining this with (\ref{DPP.sec.0.lem.1.eq.3}) proves the lemma.
\end{proof}

Now observe that, when $P$ and $Q$ are rational polytopes, then the vectors $w^0$ to $w^r$ have rational coordinates. Therefore, we recover the following remark from Lemma~\ref{DPP.sec.0.lem.1}. This remark is a consequence of a more general result due to Stephen Vavasis~\cite{Vavasis1990} that was further improved in \cite{DelPiaDeyMolinaro2017}.

\begin{rem}\label{DPP.sec.0.rem.1}
If $P$ and $Q$ are rational polytopes then $d(P,Q)^2$ is rational.
\end{rem}

We are now ready to prove the announced bound on on $d(P,Q)$ in the case when both $P$ and $Q$ are lattice polytopes.

\begin{thm}\label{DHLOPP.sec.1.thm.1}
If $P$ and $Q$ are disjoint lattice $(d,k)$-polytopes, then 
$$
d(P,Q)\geq\frac{1}{k^{2d-1}\sqrt{d}^{3d}}\mbox{.}
$$
\end{thm}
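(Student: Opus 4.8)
The plan is to bound each quantity appearing in Lemma~\ref{DPP.sec.0.lem.1} and then combine them. Since $d(P,Q) = w^0 \cdot a / \|a\|$ and $d(P,Q)$ is evidently positive, we have $d(P,Q) = |w^0 \cdot a| / \|a\|$. The numerator $|w^0 \cdot a|$ is, up to sign, the value
$$
w^0 \cdot a = \det(M)\,(w^0 \cdot w^0) - \sum_{i=1}^r \det(M_i)\,(w^0 \cdot w^i),
$$
which is an \emph{integer}: indeed the vectors $w^0,\dots,w^r$ have integer coordinates (differences of lattice points, and $w^0 = u^0 - v^0$), so all the dot products $w^i \cdot w^j$ are integers, hence $M$, the $M_i$, their determinants, and the final combination are integers. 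Crucially $w^0 \cdot a \neq 0$, because $w^0 \cdot a = \|a\|\, d(P,Q) > 0$ (we must check $a \neq 0$, which holds since $a$ is a positive multiple of $p - q \neq 0$ by the computation $\lambda(p-q) = \det(M) w^0 - \sum \det(M_i) w^i$ in the lemma's proof, $\det(M) \neq 0$). Therefore $|w^0 \cdot a| \geq 1$.

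It then remains to upper bound $\|a\|$. I would do this by bounding each term: $\|a\| \leq |\det(M)|\,\|w^0\| + \sum_{i=1}^r |\det(M_i)|\,\|w^i\|$. Each $w^i$ for $i \geq 1$ is a difference of two points in $[0,k]^d$, so $\|w^i\| \leq k\sqrt{d}$; and $w^0 = u^0 - v^0$ likewise satisfies $\|w^0\| \leq k\sqrt{d}$. The determinants $\det(M)$ and $\det(M_i)$ are determinants of $r \times r$ matrices whose columns are among the vectors $(w^0 \cdot w^1, \dots, w^0 \cdot w^r)$ or $(w^i \cdot w^1,\dots,w^i\cdot w^r)$; each such column is a vector whose $\ell$-th entry is $w^i \cdot w^\ell$ with absolute value at most $\|w^i\|\,\|w^\ell\| \leq k^2 d$, so the column has Euclidean norm at most $k^2 d \sqrt{r}$. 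Hadamard's inequality then gives $|\det(M)|, |\det(M_i)| \leq (k^2 d \sqrt r)^r \leq (k^2 d)^r \, r^{r/2}$. Since $r \leq d$, this is at most $(k^2 d)^d d^{d/2} = k^{2d} d^{3d/2}$.

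Putting the pieces together: $\|a\| \leq (r+1)\,k^{2d} d^{3d/2} \cdot k\sqrt d \leq (d+1)\, k^{2d+1} d^{(3d+1)/2}$, and hence
$$
d(P,Q) = \frac{|w^0 \cdot a|}{\|a\|} \geq \frac{1}{(d+1)\, k^{2d+1} d^{(3d+1)/2}}.
$$
To land on the stated bound $1/(k^{2d-1}\sqrt d^{\,3d+2})$ I will need to be a bit more careful with the exponent bookkeeping — for instance using that $a$ is parallel to $p-q$ allows replacing $\|a\|$ by $|\det(M)|\cdot|$(something orthogonal to the $w^i$, $i\ge1$, projection of $w^0)|$, or more simply observing that in $w^0\cdot a$ only the component of $w^0$ orthogonal to $\mathrm{span}(w^1,\dots,w^r)$ survives, so one can use a sharper estimate on that orthogonal component; also a Hadamard bound applied directly to the Gram-type structure saves factors of $k$ because the columns of $M$ and $M_i$ are themselves Gram products, letting one write $|\det(M)| \le \prod \|w^i\|^2$ over a suitable index set rather than $(k^2d)^r$ crudely. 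The main obstacle is precisely this tightening of the determinant and norm estimates to recover the exponents $2d-1$ and $3d+2$ rather than the looser $2d+1$ and $3d+1$ (plus a stray linear factor $d+1$); the conceptual content — integrality of the numerator forcing it to be at least $1$, and Hadamard bounding the denominator — is straightforward, but matching the constants in the exponent requires exploiting the Gram structure of $M$ carefully.
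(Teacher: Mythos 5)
Your approach is essentially identical to the paper's: the integrality of $w^0\cdot a$ (an integer combination of integer Gram entries) forces the numerator $\geq 1$, and Hadamard's inequality on the matrices $M$, $M_i$ (whose entries are bounded by $dk^2$) controls $\|a\|$; the paper bounds each coordinate $|a_i|$ and then multiplies by $\sqrt d$, while you bound $\|a\|$ by the triangle inequality, but the two computations are the same.

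Where you stall is a non-issue: the exponent mismatch comes entirely from your use of $r\leq d$ where the paper uses $r\leq d-1$, and the speculation about exploiting Gram structure or sharper orthogonal-projection estimates is unnecessary. The bound $r\leq d-1$ is forced by the geometry already set up in the Lemma: in its proof, $\lambda(p-q)$ is orthogonal to each $w^j$ for $1\leq j\leq r$, and since $P\cap Q=\emptyset$ implies $p-q\neq 0$, the linearly independent vectors $w^1,\dots,w^r$ all lie in the $(d-1)$\nobreakdash-dimensional orthogonal complement of $p-q$. Substituting $r\leq d-1$ into your computation gives $\|a\|\leq d\cdot(k^2d)^{d-1}(d-1)^{(d-1)/2}\cdot k\sqrt d\leq k^{2d-1}d^{3d/2}$, which is already slightly stronger than the claimed $k^{2d-1}d^{(3d+2)/2}$ and hence implies the theorem. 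Also one small phrasing nit: $a$ is a nonzero multiple of $p-q$ (the scalar $\lambda=\det(M)$ need not be positive); what is actually needed is only that $w^0\cdot a=\|a\|\,d(P,Q)>0$, which holds by the Lemma's conclusion, and that fact together with integrality gives $w^0\cdot a\geq 1$.
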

\begin{proof}
According to Lemma \ref{DPP.sec.0.lem.1},
\begin{equation}\label{DHLOPP.sec.1.thm.1.eq.1}
d(P,Q)=\frac{w^0\mathord{\cdot}a}{\|a\|}
\end{equation}
where
\begin{equation}\label{DHLOPP.sec.1.thm.1.eq.2}
a=\mathrm{det}(M)w^0-\sum_{i=1}^r\mathrm{det}(M_i)w^i\mbox{.}
\end{equation}

Assuming that $P$ and $Q$ are lattice $(d,k)$-polytopes, the vectors $w^0$ to $w^r$ have integers coordinates. It then follows from (\ref{DHLOPP.sec.1.thm.1.eq.2}) that all the coordinates of $a$ are integers. By the assumption that $P$ and $Q$ are disjoint, the numerator in the right-hand side of (\ref{DHLOPP.sec.1.thm.1.eq.1}) is then at least $1$. As a consequence,
\begin{equation}\label{DHLOPP.sec.1.thm.1.eq.3}
d(P,Q)\geq\frac{1}{\|a\|}\mbox{.}
\end{equation}

Since  $P$ and $Q$ are lattice $(d,k)$\nobreakdash-polytopes, all the $w^i$ are contained in the hypercube $[-k,k]^d$. Hence the absolute value of each entry in the matrices $M$ and $M_i$ is at most $dk^2$ and, by Hadamard's inequality,
$$
|\mathrm{det}(M_i)|\leq{d^rk^{2r}}r^\frac{r}{2}
$$
for all $i$. Moreover, the same inequality holds when $M_i$ is replaced by $M$ in the left-hand side. Plugging this into (\ref{DHLOPP.sec.1.thm.1.eq.2}) yields
$$
|a_i|\leq{(r+1)d^rk^{2r+1}}r^\frac{r}{2}.
$$

It follows that
$$
\|a\|\leq{(r+1)d^{\frac{2r+1}{2}}k^{2r+1}}r^\frac{r}{2}
$$
and according to (\ref{DHLOPP.sec.1.thm.1.eq.3}),
$$
d(P,Q)\geq\frac{1}{(r+1)d^{\frac{2r+1}{2}}k^{2r+1}r^\frac{r}{2}}\mbox{.}
$$

Finally, recall that $r$ is at most $d-1$. Hence, this implies
$$
d(P,Q)\geq\frac{1}{k^{2d-1}d^\frac{3d}{2}}
$$
as desired.
\end{proof}

Note that the distance between the origin of $\mathbb{R}^d$ and the $(d-1)$-dimensional standard simplex is equal to $1/\sqrt{d}$. It turns out that the distance between the origin and any lattice polytope contained in the positive orthant $[0,+\infty[^d$ but that does not contain the origin is at least this value.

\begin{lem}\label{origin}
If $P$ is a lattice polytope contained in $[0,+\infty[^d\mathord{\setminus}\{0\}$, then
$$
d(0,P)\geq\frac{1}{\sqrt{d}}\mbox{.}
$$ 
\end{lem}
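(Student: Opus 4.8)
The plan is to reduce the statement to a one-dimensional computation by exploiting the fact that $P$ is a lattice polytope not containing the origin. Let $p$ be the point of $P$ closest to the origin, so that $d(0,P)=\|p\|$. Since $P$ lies in the positive orthant, every vertex of $P$ has nonnegative integer coordinates, and since $0\notin P$, every vertex of $P$ has at least one coordinate that is a positive integer, hence at least $1$. The first key observation is that a nonnegative combination of such vertices has coordinate sum at least $1$: if $v^1,\dots,v^m$ are the vertices of $P$ and $p=\sum_j \mu_j v^j$ with $\mu_j\geq 0$ and $\sum_j \mu_j=1$, then each $v^j$ satisfies $\sum_{\ell=1}^d v^j_\ell\geq 1$ (being a nonzero vector of nonnegative integers), and therefore $\sum_{\ell=1}^d p_\ell=\sum_j \mu_j \sum_\ell v^j_\ell\geq \sum_j \mu_j=1$.

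The second step is to bound $\|p\|$ from below using $\sum_\ell p_\ell\geq 1$ together with $p_\ell\geq 0$. By the Cauchy--Schwarz inequality applied to the all-ones vector and $p$,
$$
1\leq\sum_{\ell=1}^d p_\ell=\mathbf{1}\mathord{\cdot}p\leq\sqrt{d}\,\|p\|\mbox{,}
$$
so $\|p\|\geq 1/\sqrt{d}$, which is exactly the claimed bound $d(0,P)\geq 1/\sqrt{d}$.

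I do not anticipate a serious obstacle here; the only point requiring a little care is the justification that every vertex of $P$ has coordinate sum at least $1$, which uses integrality together with the hypothesis $0\notin P$ in an essential way (over the reals the statement is false, as the segment from $(\varepsilon,0)$ to $(0,\varepsilon)$ shows). One should also note that the bound is tight, attained by the standard simplex $\mathrm{conv}\{e^1,\dots,e^d\}$ whose closest point to the origin is $(1/d,\dots,1/d)$ at distance $1/\sqrt{d}$, as already remarked in the text preceding the lemma.
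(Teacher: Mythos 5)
Your proof is correct and is essentially the same argument as the paper's: identify the closest point $p$, observe that every vertex has $\ell_1$-norm (here: coordinate sum, since $P$ lies in the nonnegative orthant) at least $1$ by integrality and $0\notin P$, pass this bound to $p$ via convexity, and finish with Cauchy--Schwarz in the form $\|p\|_1\leq\sqrt{d}\,\|p\|_2$. The only cosmetic difference is that the paper phrases the middle step with the $\ell_1$ norm rather than an explicit dot product with the all-ones vector.
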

\begin{proof}
Let $p$ be a point in $P$ such that $d(0,P)=d(0,p)$. Observe that all the vertices $x$ of $P$ satisfy $\|x\|_1\geq1$. As any point in $P$ is a convex combination of vertices of $P$, it follows that $\|p\|_1\geq1$. However, by the Cauchy--Schwarz inequality $\|p\|_1\leq\sqrt{d}\|p\|_2$, which proves the lemma.
\end{proof}

\section{Upper bounds}\label{DHLOPP.sec.2}

In this section, $k$ is fixed and we further consider two positive integers $\sigma$ and $\delta$. Most of this section is devoted to prove the following theorem.

\begin{thm}\label{DHLOPP.sec.2.thm.2}
If $\delta$ is at least $4$, then there exist two lattice $(d,k)$\nobreakdash-polytopes $P$ and $Q$, where $d$ is equal to $\delta(\sigma+1)$, such that
\begin{equation}\label{DHLOPP.sec.2.eq.1}
d(P,Q)\leq\frac{\sqrt{\delta\sigma}}{\bigl(\!k(\delta-1)\bigr)^{\!\sigma}}\mbox{.}
\end{equation}
\end{thm}

Before we build the two polytopes $P$ and $Q$ that appear in the statement of Theorem~\ref{DHLOPP.sec.2.thm.2}, let us state and prove the main result of the section, which we obtain as a consequence of this theorem.

\begin{thm}\label{DHLOPP.sec.2.thm.1}
Consider a number $\alpha$ in $]0,1[$. For any large enough $d$, there exist two disjoint lattice $(d,k)$-polytopes $P$ and $Q$ such that
$$
d(P,Q)\leq\frac{1}{k^{d^\alpha}d^{(1-\alpha)d^\alpha}}\mbox{.}
$$
\end{thm}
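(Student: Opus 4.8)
The plan is to derive Theorem~\ref{DHLOPP.sec.2.thm.1} from inequality~(\ref{DHLOPP.sec.2.eq.1}) by choosing the two parameters $\sigma$ and $\delta$ as suitable powers of $d$. The point is that~(\ref{DHLOPP.sec.2.eq.1}) produces, for $\delta\geq4$, two disjoint lattice $(\delta(\sigma+1),k)$-polytopes at distance at most $\sqrt{\delta\sigma}/(k(\delta-1))^{\sigma}$; one wants the \emph{exponent} $\sigma$ to be of order $d^{\alpha}$ and the \emph{base} $k(\delta-1)$ to be of order $kd^{1-\alpha}$, so that $(k(\delta-1))^{\sigma}$ is comparable to $k^{d^{\alpha}}d^{(1-\alpha)d^{\alpha}}$, while keeping the ambient dimension $\delta(\sigma+1)$ at most $d$. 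To leave room for the inevitable rounding losses, I would not take $\sigma$ of order exactly $d^{\alpha}$, but rather of a slightly larger polynomial order.

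Concretely, I would fix a real number $\beta$ with $\alpha<\beta<1$ and, for a given $d$, set $\sigma=\lfloor d^{\beta}\rfloor$ and $\delta=\lfloor d/(\sigma+1)\rfloor$. Then $\sigma$ and $\delta$ are positive integers with $\delta(\sigma+1)\leq d$; moreover $\sigma\geq d^{\beta}-1$ and $\delta\geq d/(\sigma+1)-1\geq\tfrac12 d^{1-\beta}-1$, so for all large enough $d$ we have $\sigma\geq\tfrac12 d^{\beta}$, $\delta\geq\tfrac14 d^{1-\beta}$, and in particular $\delta\geq4$. By~(\ref{DHLOPP.sec.2.eq.1}) there are disjoint lattice $(\delta(\sigma+1),k)$-polytopes at distance at most $\sqrt{\delta\sigma}/(k(\delta-1))^{\sigma}$; since $\delta(\sigma+1)\leq d$, appending $d-\delta(\sigma+1)$ zero coordinates turns them into disjoint lattice $(d,k)$-polytopes $P$ and $Q$ with the same distance.

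It then remains to check, for $d$ large, that
$$
\frac{\sqrt{\delta\sigma}}{(k(\delta-1))^{\sigma}}\leq\frac{1}{k^{d^{\alpha}}d^{(1-\alpha)d^{\alpha}}},
$$
i.e.\ that $(k(\delta-1))^{\sigma}\geq\sqrt{\delta\sigma}\,k^{d^{\alpha}}d^{(1-\alpha)d^{\alpha}}$. Using $\delta-1\geq\tfrac18 d^{1-\beta}$, $\sigma\geq\tfrac12 d^{\beta}$, and $\delta\sigma\leq d$ (all valid for $d$ large), and noting that $\tfrac{k}{8}d^{1-\beta}\geq1$ for $d$ large so that increasing the exponent only helps, it suffices to prove
$$
\Bigl(\tfrac{k}{8}d^{1-\beta}\Bigr)^{\!d^{\beta}/2}\geq\sqrt{d}\,k^{d^{\alpha}}d^{(1-\alpha)d^{\alpha}}.
$$
Taking logarithms, the left-hand side equals $\tfrac12 d^{\beta}\log(k/8)+\tfrac12(1-\beta)d^{\beta}\log d$ and the right-hand side equals $\tfrac12\log d+d^{\alpha}\log k+(1-\alpha)d^{\alpha}\log d$; the dominant term on the left is $\tfrac12(1-\beta)d^{\beta}\log d$ and on the right $(1-\alpha)d^{\alpha}\log d$, all remaining terms being of strictly smaller order, so since $\beta>\alpha$ (whence $d^{\beta}/d^{\alpha}\to\infty$) the inequality holds for all sufficiently large $d$.

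I expect the only genuinely delicate point to be this choice of exponents. Taking $\sigma\approx d^{\alpha}$ and $\delta\approx d^{1-\alpha}$ directly fails: the constant-factor losses forced by the floor functions and by the "$+1$" in $\delta(\sigma+1)$ get raised to the power $\sigma\approx d^{\alpha}$ and swamp the target bound. Working instead at the strictly larger order $d^{\beta}$ with $\alpha<\beta<1$ creates a super-polynomial cushion (a factor $d^{\Theta(d^{\beta})}$ against the required $d^{\Theta(d^{\alpha})}$) that absorbs every such loss, so no constant needs to be tracked; the remaining ingredients — that $\delta(\sigma+1)\leq d$, that $\delta\geq4$ eventually, and the coordinate embedding into $[0,k]^{d}$ — are routine.
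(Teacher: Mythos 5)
Your proposal is correct and takes essentially the same approach as the paper: both fix $\beta\in\mathopen]\alpha,1\mathclose[$, set $\sigma=\lfloor d^{\beta}\rfloor$ and $\delta=\lfloor d/(\sigma+1)\rfloor$, verify $\delta\geq4$ for $d$ large, apply Theorem~\ref{DHLOPP.sec.2.thm.2}, embed into dimension $d$, and then compare the resulting bound to the target asymptotically. Your version simply carries out the final asymptotic comparison more explicitly via logarithms, where the paper compresses it into a one-line "behaves like" remark.
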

\begin{proof}
Let $\beta$ be a number in the interval $]\alpha,1[$. Assume that
\begin{equation}\label{DHLOPP.sec.2.thm.1.eq.1}
d\geq8^{\frac{1}{1-\beta}}
\end{equation}
and denote
\begin{equation}\label{DHLOPP.sec.2.thm.1.eq.2}
\left\{
\begin{array}{l}
\sigma=\left\lfloor{d^{\beta}}\right\rfloor\mbox{ and}\\[\medskipamount]
\delta=\displaystyle\left\lfloor\frac{d}{\sigma+1}\right\rfloor\!\!\mbox{.}
\end{array}
\right.
\end{equation}

Observe that $\sigma$ is at least $1$. In addition, (\ref{DHLOPP.sec.2.thm.1.eq.1}) can be rewritten into
$$ 
d\geq{8d^\beta}\mbox{.}
$$

As $d^\beta$ is at least $1$, it follows that
$$
d\geq4d^{\beta}+4
$$
and as a consequence, $\delta$ is at least $4$.

According to Theorem \ref{DHLOPP.sec.2.thm.2}, under these conditions on $\sigma$ and $\delta$, there exist two lattice $(\delta(\sigma+1),k)$\nobreak-polytopes $P$ and $Q$ such that
\begin{equation}\label{DHLOPP.sec.2.thm.1.eq.3}
d(P,Q)\leq\frac{\sqrt{\delta\sigma}}{\bigl(k(\delta-1)\bigr)^{\!\sigma}}\mbox{.}
\end{equation}

However, by (\ref{DHLOPP.sec.2.thm.1.eq.2}), $d$ is at least $(\sigma+1)\delta$. Therefore, $P$ and $Q$ are also lattice $(d,k)$\nobreakdash-polytopes. Moreover, replacing $\sigma$ and $\delta$ in the right-hand side of (\ref{DHLOPP.sec.2.thm.1.eq.3}) by their expressions as functions of $d$ and $\beta$ yields
\begin{equation}\label{DHLOPP.sec.2.thm.1.eq.4}
d(P,Q)\leq\frac{\sqrt{\left\lfloor{d^{\beta}}\right\rfloor\left\lfloor\frac{d}{\left\lfloor{d^{\beta}}\right\rfloor+1}\right\rfloor}}{k^{\left\lfloor{d^{\beta}}\right\rfloor}\!\left(\left\lfloor\frac{d}{\left\lfloor{d^{\beta}}\right\rfloor+1}\right\rfloor-1\right)^{\!\left\lfloor{d^{\beta}}\right\rfloor}}\mbox{.}
\end{equation}

Now observe that the right-hand side of (\ref{DHLOPP.sec.2.thm.1.eq.4}) behaves like
$$
\frac{\sqrt{d}}{k^{d^\beta}d^{(1-\beta)d^\beta}}
$$
as $d$ goes to infinity. Since $\alpha$ is less than $\beta$,
$$
\frac{\sqrt{d}}{k^{d^\beta}d^{(1-\beta)d^\beta}}<\frac{1}{k^{d^\alpha}d^{(1-\alpha)d^\alpha}}
$$
when $d$ is large enough. Hence, the right-hand side of (\ref{DHLOPP.sec.2.thm.1.eq.4}) is less than
$$
\frac{1}{k^{d^\alpha}d^{(1-\alpha)d^\alpha}}
$$
for any large enough $d$, as desired.
\end{proof}

It should be noted that, taking $\alpha$ equal to $1/2$ in the statement of Theorem~\ref{DHLOPP.sec.2.thm.1} immediately results in Theorem~\ref{DHLOPP.sec.0.thm.2}.

From now on, we denote $\delta(\sigma+1)$ by $d$. Let us proceed to building the lattice $(d,k)$\nobreakdash-polytopes $P$ and $Q$ that appear in the statement of Theorem \ref{DHLOPP.sec.2.thm.2}.

Denote by $a$ the vector from $\mathbb{Z}^{\sigma+1}$ whose coordinate $i$ is
$$
a_i=\bigl(k(1-\delta)\bigr)^{i-1}\mbox{.}
$$

A vector $\overline{x}$ in $\mathbb{R}^d$ can be built from any vector $x$ in $\mathbb{R}^{\sigma+1}$ by taking
$$
\overline{x}_i=x_{\lfloor{(i-1)/\delta}\rfloor+1}
$$
for every integer $i$. Equivalently,
$$
\overline{x}=(\underbrace{x_1,\ldots,x_1}_{\displaystyle\delta\mbox{ times}},\underbrace{x_2,\ldots,x_2}_{\displaystyle\delta\mbox{ times}},\ldots,\underbrace{x_{\sigma+1},\ldots,x_{\sigma+1}}_{\displaystyle\delta\mbox{ times}})\mbox{.}
$$

Denote by $P$ the convex hull of the lattice points $x$ contained in the hypercube $[0,k]^d$ that satisfy $\overline{a}\mathord{\cdot}x=0$. Likewise, denote by $Q$ the convex hull of the lattice points $x$ in $[0,k]^d$ such that $\overline{a}\mathord{\cdot}x=1$. In order to prove that $P$ and $Q$ satisfy the inequality (\ref{DHLOPP.sec.2.eq.1}), we will exhibit a point in $P$ and a point in $Q$ whose distance is at most the right-hand side of this inequality.

Consider the $(\sigma+1)\mathord{\times}(\sigma+1)$ matrix
$$
M_P=\left[
\begin{array}{cccccc}
0 & A & A & \cdots & A\\
0 & B & C & \cdots & C\\
0& 0 & B & \ddots & \vdots \\
\vdots & \vdots & \ddots & \ddots & C \\
0 & 0 & \cdots & 0 & B \\
\end{array}
\right]
$$
where
$$
\left\{
\begin{array}{l}
A=\displaystyle(\delta-1)k/\delta\mbox{,}\\
B=1/\delta\mbox{, and}\\
C=A+B\mbox{.}\\
\end{array}
\right.
$$

Recall that we identify the points from $\mathbb{R}^{\sigma+1}$ to the vector of their coordinates. In particular, the columns of $M_P$ are points from $\mathbb{R}^{\sigma+1}$.

\begin{prop}\label{DHLOPP.sec.2.eq.2.prop.1}
If $x$ is a column of $M_P$, then $\overline{x}$ belongs to $P$.
\end{prop}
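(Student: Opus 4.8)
The plan is to exhibit, for each column $c$ of $M_P$, a convex combination of lattice points of $[0,k]^d$ lying on the hyperplane $\{x:\overline{a}\mathord{\cdot}x=0\}$ that equals $\overline{c}$; since such points are precisely the ones whose convex hull defines $P$, this shows $\overline{c}\in P$. It is worth stressing that it would \emph{not} suffice to check that $\overline{c}$ lies in the cube and on that hyperplane: the slice $\{x\in[0,k]^d:\overline{a}\mathord{\cdot}x=0\}$ is in general strictly larger than $P$, since its vertices need not be integral, so a genuine decomposition into lattice points is required.

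The first step is the identity $a\mathord{\cdot}c=0$ for every column $c$ of $M_P$. Writing $e_1,\ldots,e_{\sigma+1}$ for the standard basis of $\mathbb{R}^{\sigma+1}$ and using $C=A+B$, the $j$-th column of $M_P$ can be written as
$$
c^j=A\sum_{i=1}^{j-1}e_i+B\sum_{i=2}^{j}e_i
$$
for every $j$ in $\{1,\ldots,\sigma+1\}$, where for $j=1$ both sums are empty and $c^1=0$. Hence
$$
a\mathord{\cdot}c^j=\sum_{i=1}^{j-1}\bigl(Aa_i+Ba_{i+1}\bigr)=\frac{1}{\delta}\sum_{i=1}^{j-1}\bigl(k(\delta-1)a_i+a_{i+1}\bigr)\mbox{,}
$$
which is zero because $k(\delta-1)a_i+a_{i+1}=0$ for every $i$ by the definition of $a$. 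As $\overline{a}\mathord{\cdot}\overline{y}=\delta\,(a\mathord{\cdot}y)$ for every $y$ in $\mathbb{R}^{\sigma+1}$ — an immediate consequence of the block structure of the bar operation — we conclude that $\overline{a}\mathord{\cdot}\overline{c}=0$ for every column $c$ of $M_P$.

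The second step builds the decomposition. Fix a column $c$ of $M_P$ and set $T_r=\delta c_r$ for $1\leq r\leq\sigma+1$; by the shape of $M_P$, each $T_r$ equals one of the integers $k(\delta-1)$, $k(\delta-1)+1$, $1$, and $0$, so that $0\leq T_r\leq k\delta$ (here $k\geq1$ is used). Choose any lattice point $v$ in $[0,k]^d$ whose coordinates inside the $r$-th block of $\delta$ coordinates sum to $T_r$, for every $r$; this is possible exactly because $0\leq T_r\leq k\delta$. Then $\overline{a}\mathord{\cdot}v=\sum_{r=1}^{\sigma+1}a_rT_r=\delta\,(a\mathord{\cdot}c)=0$, so $v$ is one of the lattice points whose convex hull is $P$, and the same holds for each of the $\delta$ points obtained from $v$ by cyclically shifting the coordinates within each of the $\sigma+1$ blocks by the same amount, simultaneously — such a permutation preserves every block sum and hence keeps the point in $[0,k]^d$ and on $\{\overline{a}\mathord{\cdot}x=0\}$. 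Averaging these $\delta$ points yields a point all of whose block-$r$ coordinates equal $T_r/\delta=c_r$, namely $\overline{c}$. Thus $\overline{c}$ is a convex combination of lattice points of $[0,k]^d$ on the hyperplane $\{\overline{a}\mathord{\cdot}x=0\}$, and therefore $\overline{c}\in P$.

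The computational core is the identity $a\mathord{\cdot}c=0$, which is routine once the columns of $M_P$ are expressed via $C=A+B$ and the defining recursion of $a$ is invoked. The one step that is not mere bookkeeping is the symmetrisation in the second step — converting a single, freely chosen integral point with the prescribed block sums into the block-constant point $\overline{c}$ by averaging over the cyclic group acting identically on each of the $\delta$-blocks. Beyond that and the elementary range checks on the $T_r$, I anticipate no real obstacle, the chief thing to watch being the block indexing.
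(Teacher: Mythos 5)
Your argument is correct in structure and closely parallels the paper's, but it packages the computations more cleanly. Both proofs ultimately express $\overline{x}$ as the barycenter of $\delta$ lattice points of $P$ obtained from one another by cyclically shifting coordinates within each $\delta$-block. The paper writes those $\delta$ points down via explicit coordinate formulas for $u^s_j$ (equal to $0$, $1$, or $k$ depending on which block $j$ lies in and the residue of $j$ modulo $\delta$) and then simply asserts $\overline{a}\cdot u^s=0$. You instead isolate the algebraic core: you establish $a\cdot c^j=0$ from the recursion for $a$ by a one-line telescoping, note that $\overline{a}\cdot v$ depends only on the block sums of $v$, check that a lattice point with the prescribed block sums $T_r=\delta c_r\in\{0,\,1,\,k(\delta-1),\,k(\delta-1)+1\}\subseteq[0,k\delta]$ exists, and then invoke block-cyclic averaging. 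This separation of the identity from the combinatorics saves the reader the index-chasing needed to verify the paper's $\overline{a}\cdot u^s=0$ directly, and makes clear where each hypothesis is used.

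One caveat, which concerns the paper more than your proof. The identity you invoke, $k(\delta-1)a_i+a_{i+1}=0$, is \emph{false} under the definition $a_i=\bigl(k(\delta-1)\bigr)^{i-1}$ as it literally appears in Section~\ref{DHLOPP.sec.2}: with that definition $a_{i+1}=k(\delta-1)a_i$, so the combination equals $2a_{i+1}$ and $a\cdot c^j\neq 0$ for $j\geq 2$. The same sign problem also invalidates the paper's own step $\overline{a}\cdot u^s=0$ (with a strictly positive $a$, the only nonnegative lattice solution of $\overline{a}\cdot x=0$ is the origin, which would collapse $P$ to a point and contradict Theorem~\ref{DHLOPP.sec.2.thm.2}). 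The pervasive appearance of $k(1-\delta)$ in the formulas for $\theta$ and the geometric sums later in Section~\ref{DHLOPP.sec.2} makes clear that the intended definition is $a_i=\bigl(k(1-\delta)\bigr)^{i-1}$, for which your recursion and the rest of your argument hold verbatim. So there is no gap in your reasoning; you have, in effect, silently corrected a sign typo in the construction.
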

\begin{proof}
Let $x$ be the column $i$ of $M_p$.
Observe that, if $i$ is equal to $1$, then $\overline{a}\mathord{\cdot}\overline{x}=0$ and in particular, $\overline{x}$ belongs to $P$. Now assume that $i$ is at least $2$ and consider an integer $s$ such that $1\leq{s}\leq\delta$. Denote by $u^s$ the lattice point in $[0,k]^d$ whose coordinates are given by
$$
u^s_j=\left\{
\begin{array}{l}
k\mbox{ if }1\leq{j}\leq\delta(i-1)\mbox{ and }\bigl((j-1)\mbox{ mod }{\delta}\bigr)+1\neq{s}\mbox{,}\\
1\mbox{ if }\delta<j\leq\delta{i}\mbox{ and }\bigl((j-1)\mbox{ mod }{\delta}\bigr)+1=s\mbox{, and}\\
0\mbox{ otherwise.}
\end{array}
\right.
$$

Note that $u^s$ is a point in $P$ because $\overline{a}\mathord{\cdot}u^s=0$. As the barycenter of the points $u^s$ when $s$ ranges from $1$ to $\delta$ is precisely $\overline{x}$ this proves the proposition.
\end{proof}

Now consider the $(\sigma+1)\mathord{\times}(\sigma+1)$ matrix $M_Q$ obtained from $M_P$ by adding $1/\delta$ to all the entries of the first row:
$$
M_Q=\left[
\begin{array}{cccccc}
B & C & C & \cdots & C\\
0 & B & C & \cdots & C\\
0& 0 & B & \ddots & \vdots \\
\vdots & \vdots & \ddots & \ddots & C\\
0 & 0 & \cdots & 0 & B \\
\end{array}
\right]\mbox{.}
$$

\begin{prop}\label{DHLOPP.sec.2.eq.2.prop.2}
If $x$ is a column of $M_Q$, then $\overline{x}$ belongs to $Q$.
\end{prop}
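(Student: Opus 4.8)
The plan is to mimic the proof of Proposition~\ref{DHLOPP.sec.2.eq.2.prop.1}. Let $x$ be the $i$-th column of $M_Q$ and let $z$ be the $i$-th column of $M_P$. Since $M_Q$ is obtained from $M_P$ by adding $B=1/\delta$ to every entry of its first row, one has $x=z+(B,0,\ldots,0)$, and hence $\overline{x}=\overline{z}+c$, where $c=(\underbrace{1/\delta,\ldots,1/\delta}_{\delta\mbox{ times}},0,\ldots,0)$. The crucial remark is that $c$ is exactly the barycenter of the first $\delta$ standard basis vectors $e^1$ to $e^\delta$ of $\mathbb{R}^d$, and that each $e^s$ is a lattice point of $[0,k]^d$ satisfying $\overline{a}\mathord{\cdot}e^s=a_1=1$; in particular, each $e^s$ belongs to $Q$.

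When $i=1$, the vector $z$ is zero, so $\overline{x}=c$ is the barycenter of $e^1$ to $e^\delta$ and therefore lies in $Q$. When $i\geq2$, I would reuse the lattice points $u^1$ to $u^\delta$ constructed in the proof of Proposition~\ref{DHLOPP.sec.2.eq.2.prop.1}, whose barycenter is $\overline{z}$, and set $v^s=u^s+e^s$ for every integer $s$ such that $1\leq s\leq\delta$. Three points then need to be checked. First, $v^s$ is a lattice point of $[0,k]^d$: the $s$-th coordinate of $u^s$ is $0$, because that coordinate lies in the first block at position $s$ and so falls under the third case of the definition of $u^s$; thus adding $e^s$ only changes that coordinate from $0$ to $1$ while leaving the others, which already belong to $\{0,\ldots,k\}$, unchanged. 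Second, $\overline{a}\mathord{\cdot}v^s=\overline{a}\mathord{\cdot}u^s+\overline{a}\mathord{\cdot}e^s=0+1=1$, so $v^s$ belongs to $Q$. Third, the barycenter of $v^1$ to $v^\delta$ equals the barycenter of $u^1$ to $u^\delta$ plus the barycenter of $e^1$ to $e^\delta$, that is $\overline{z}+c=\overline{x}$. Consequently $\overline{x}$ is a convex combination of points of $Q$, which proves that $\overline{x}$ belongs to $Q$.

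I do not anticipate a genuine obstacle here, since the argument is essentially a transcription of the proof of Proposition~\ref{DHLOPP.sec.2.eq.2.prop.1}; the only step requiring a bit of care is the verification that the shifted points $v^s$ remain inside the cube $[0,k]^d$, which is exactly where the vanishing of the $s$-th coordinate of $u^s$ is used. One should also take care to dispatch the degenerate case $i=1$ on its own, because the proof of Proposition~\ref{DHLOPP.sec.2.eq.2.prop.1} produces no points $u^s$ in that case.
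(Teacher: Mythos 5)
Your proof is correct and, at bottom, is the same argument as the paper's: the points $v^s=u^s+e^s$ that you construct coincide exactly with the lattice points the paper defines explicitly in its proof of this proposition, and the barycenter computation is identical. The only presentational difference is that you derive the $v^s$ from the $u^s$ of Proposition~\ref{DHLOPP.sec.2.eq.2.prop.1} rather than writing their coordinates directly, and you split off the case $i=1$ (which the paper handles uniformly since its definition of $v^s$ already covers it).
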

\begin{proof}
Let $x$ be the column $i$ of $M_Q$. Consider an integer $s$ such that $1\leq{s}\leq\delta$ and denote by $v^j$ the lattice point in $[0,k]^d$ whose coordinates are
$$
v^s_j=\left\{
\begin{array}{l}
k\mbox{ if }1\leq{j}\leq\delta(i-1)\mbox{ and }\bigl((j-1)\mbox{ mod }{\delta}\bigr)+1\neq{s}\mbox{,}\\
1\mbox{ if }1\leq{j}\leq\delta{i}\mbox{ and }\bigl((j-1)\mbox{ mod }{\delta}\bigr)+1=s\mbox{, and}\\
0\mbox{ otherwise.}
\end{array}
\right.
$$

By construction, $\overline{a}\mathord{\cdot}v^s=1$ and $v^s$ is a point in $Q$. The proposition then follows from the observation that $\overline{x}$ is the barycenter of the points $v^1$ to $v^\delta$.
\end{proof}

For any integer $i$ such that $0\leq{i}\leq\sigma$, we denote the column $i+1$ of the matrix $M_P$ by $p^i$ and the column $i+1$ of $M_Q$ by $q^i$.

Assume that $\delta$ is at least $3$ and consider the points
\begin{equation}\label{DHLOPP.sec.2.eq.2}
p=\Biggl(1-\theta\frac{\bigl(k(\delta-1)\bigr)^{\!\sigma}-1}{\bigl(k(\delta-1)-1\bigr)\bigl(k(\delta-1)\bigr)^{\!\sigma}}\Biggr)p^0+\sum_{i=1}^\sigma\frac{\theta}{\bigl(k(\delta-1)\bigr)^{\!i}}p^i
\end{equation}
and
\begin{equation}\label{DHLOPP.sec.2.eq.3}
q=\Biggl(\frac{k(\delta-1)+1}{k(\delta-1)}-\theta\frac{\bigl(k(\delta-1)\bigr)^{\!\sigma}-1}{\bigl(k(\delta-1)-1\bigr)\bigl(k(\delta-1)\bigr)^{\!\sigma}}\Biggr)q^0+\sum_{i=1}^\sigma\frac{\theta(1+(-1)^i)}{\bigl(k(\delta-1)\bigr)^{\!i}}q^i
\end{equation}
where
$$
\theta=\frac{\bigl(k(1-\delta)-1\bigr)\bigl(k(1-\delta)\bigr)^{\!\sigma-1}}{\bigl(k(1-\delta)\bigr)^{\!\sigma}-1}\mbox{.}
$$

These points are defined as linear combinations of the columns of $M_P$ and $M_Q$. If $\delta$ is at least $4$, they are convex combinations of these columns.

\begin{prop}\label{DHLOPP.sec.2.eq.2.prop.3}
If $\delta$ is at least $4$, then $p$ and $q$ are convex combinations of the columns of $M_P$ and $M_Q$, respectively.
\end{prop}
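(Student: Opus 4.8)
The plan is to verify that the coefficients appearing in the linear combinations \eqref{DHLOPP.sec.2.eq.2} and \eqref{DHLOPP.sec.2.eq.3} are nonnegative and sum to $1$, under the hypothesis $\delta\geq4$. Write $\kappa=k(\delta-1)$ for brevity; note $\kappa\geq3k\geq3$. First I would record the value of $\theta$ in this notation: since $k(1-\delta)=-\kappa$, the numerator $\bigl(k(1-\delta)-1\bigr)\bigl(k(1-\delta)\bigr)^{\sigma-1}=(-\kappa-1)(-\kappa)^{\sigma-1}=(-1)^\sigma(\kappa+1)\kappa^{\sigma-1}$ and the denominator $\bigl(k(1-\delta)\bigr)^\sigma-1=(-\kappa)^\sigma-1=(-1)^\sigma\kappa^\sigma-1$, so
$$
\theta=\frac{(-1)^\sigma(\kappa+1)\kappa^{\sigma-1}}{(-1)^\sigma\kappa^\sigma-1}.
$$
When $\sigma$ is even this is $\frac{(\kappa+1)\kappa^{\sigma-1}}{\kappa^\sigma-1}$, which lies in $(0,1]$ because $(\kappa+1)\kappa^{\sigma-1}=\kappa^\sigma+\kappa^{\sigma-1}$ exceeds $\kappa^\sigma-1$ only slightly — actually it is $\geq1$ and at most, say, $2$; the precise bound I will need is $0<\theta\leq\frac{2}{\kappa}$ roughly, obtained by comparing $(\kappa+1)\kappa^{\sigma-1}$ with $\kappa^\sigma-1\geq\kappa^{\sigma}/2$. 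When $\sigma$ is odd, $\theta=\frac{-(\kappa+1)\kappa^{\sigma-1}}{-\kappa^\sigma-1}=\frac{(\kappa+1)\kappa^{\sigma-1}}{\kappa^\sigma+1}$, which again lies in $(0,1)$. In both cases $0<\theta<1$, and more usefully $\theta\kappa^{-1}$ is small; I will extract the exact inequality $\theta\cdot\frac{\kappa^\sigma-1}{(\kappa-1)\kappa^\sigma}<1$ that controls the $p^0$- and $q^0$-coefficients.

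Next I would check the coefficients of $p^1,\dots,p^\sigma$ in \eqref{DHLOPP.sec.2.eq.2}: each equals $\theta/\kappa^i>0$, so nonnegativity there is immediate. Their sum is $\theta\sum_{i=1}^\sigma\kappa^{-i}=\theta\cdot\frac{\kappa^\sigma-1}{(\kappa-1)\kappa^\sigma}$, which is exactly the quantity subtracted from $1$ to form the coefficient of $p^0$; hence the coefficients of $p^0,\dots,p^\sigma$ sum to $1$ by construction. It remains to see that the $p^0$-coefficient is nonnegative, i.e. that $\theta\cdot\frac{\kappa^\sigma-1}{(\kappa-1)\kappa^\sigma}\leq1$. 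Since $\frac{\kappa^\sigma-1}{(\kappa-1)\kappa^\sigma}<\frac{1}{\kappa-1}\leq\frac{1}{\kappa-1}$ and $\theta<1$ (in fact $\theta$ is of order $1/\kappa$ times a bounded factor), this holds comfortably once $\kappa\geq2$, which is guaranteed by $\delta\geq4$ (indeed $\delta\geq3$ would suffice here, but the problem of $q$ is what forces $\delta\geq4$).

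For $q$ the argument is parallel but the coefficient of $q^i$ is $\theta(1+(-1)^i)/\kappa^i$, which is $0$ for odd $i$ and $2\theta/\kappa^i$ for even $i$ — still nonnegative. The sum of the $q^1,\dots,q^\sigma$ coefficients is $2\theta\sum_{i\text{ even}}\kappa^{-i}$, a geometric-type sum I will evaluate in closed form; the coefficient of $q^0$ is $\frac{\kappa+1}{\kappa}$ minus the same $\theta\cdot\frac{\kappa^\sigma-1}{(\kappa-1)\kappa^\sigma}$ appearing for $p$ (not the even-only sum), so the total is $\frac{\kappa+1}{\kappa}-\theta\cdot\frac{\kappa^\sigma-1}{(\kappa-1)\kappa^\sigma}+2\theta\sum_{i\text{ even}}\kappa^{-i}$, and I must show this equals $1$. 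The hard part — and the main obstacle — is precisely this bookkeeping identity: one has to verify that the definition of $\theta$ is exactly what makes $\frac{1}{\kappa}+2\theta\sum_{i\text{ even}}\kappa^{-i}=\theta\cdot\frac{\kappa^\sigma-1}{(\kappa-1)\kappa^\sigma}$, i.e. $\theta\Bigl(\frac{\kappa^\sigma-1}{(\kappa-1)\kappa^\sigma}-2\sum_{i\text{ even},\,1\leq i\leq\sigma}\kappa^{-i}\Bigr)=\frac1\kappa$. Writing the difference of the two sums as $\sum_i(-1)^i\kappa^{-i}$-type alternating series and summing it in closed form turns the left side into $\theta$ times $\frac{(-\kappa)^{-\sigma}\bigl((-\kappa)-1\bigr)-\text{something}}{\cdots}$, matching the reciprocal of the displayed formula for $\theta$ up to the factor $1/\kappa$. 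Finally, nonnegativity of the $q^0$-coefficient follows once we know the total is $1$ and all other coefficients are nonnegative and (being $O(1/\kappa)$ in sum) strictly less than $1$; here $\delta\geq4$, hence $\kappa\geq3k\geq3$, is exactly the margin that keeps $\frac{\kappa+1}{\kappa}-(\text{small})\geq0$ and $\leq1$, whereas $\delta=3$ would already have been used up in the hypothesis "$\delta$ at least $3$" needed just to define $p$ and $q$. I would present the even/odd case split for $\theta$ first, then the two summation identities, and close with the sign checks.
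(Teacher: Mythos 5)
Your overall plan — check that the coefficients in~\eqref{DHLOPP.sec.2.eq.2} and~\eqref{DHLOPP.sec.2.eq.3} are nonnegative and sum to $1$ — is exactly the paper's approach. But there is a genuine numerical error that would make your argument break, and you do not carry out the step you yourself identify as the hard part.

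The error concerns the size of $\theta$. With $\kappa=k(\delta-1)$, one computes $\theta=\frac{(\kappa+1)\kappa^{\sigma-1}}{\kappa^\sigma-1}$ when $\sigma$ is even and $\theta=\frac{(\kappa+1)\kappa^{\sigma-1}}{\kappa^\sigma+1}$ when $\sigma$ is odd; in both cases $\theta\geq 1$, roughly $\theta\approx 1+1/\kappa$. Your stated bound $0<\theta\leq 2/\kappa$ is therefore false, as is the later assertion ``$\theta<1$ (in fact $\theta$ is of order $1/\kappa$ times a bounded factor)''; you even seem to catch this mid-sentence (``actually it is $\geq1$ and at most, say, $2$'') before contradicting yourself again. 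This is not cosmetic: the nonnegativity of the $p^0$- and $q^0$-coefficients is exactly the place where the hypothesis $\delta\geq 4$ is used. The correct route, which the paper follows, is to bound $\theta\leq\frac{(\kappa+1)\kappa^{\sigma-1}}{\kappa^\sigma-1}$, feed it into $\theta\cdot\frac{\kappa^\sigma-1}{(\kappa-1)\kappa^\sigma}\leq\frac{\kappa+1}{(\kappa-1)\kappa}$, and observe this is $<1$ once $\kappa>2$, i.e.\ once $\delta\geq 4$ (for $k\geq1$). Had $\theta$ really satisfied $\theta\leq 2/\kappa$, the proposition would be true already for $\delta\geq 2$, which it is not; the erroneous bound hides exactly where the hypothesis matters.

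The second gap is that you never verify the sum-to-one identity for the $q$-coefficients, which you flag as ``the main obstacle'' and then dispose of with ``matching the reciprocal of the displayed formula for $\theta$'' without a computation. The cleanest way, and the one the paper takes, is not an even-only sum but the split $1+(-1)^i=1+(-1)^i$, which turns $\sum_{i=1}^\sigma\frac{\theta(1+(-1)^i)}{\kappa^i}$ into the two geometric series $\theta\sum\kappa^{-i}$ and $\theta\sum\bigl(k(1-\delta)\bigr)^{-i}$. The first of these exactly cancels the subtracted term in the $q^0$-coefficient, and the second evaluates (using the definition of $\theta$) to $\frac{1}{k(1-\delta)}=-\frac{1}{\kappa}$, which kills the extra $\frac{1}{\kappa}$ in $\frac{\kappa+1}{\kappa}$ and leaves $1$. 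Writing this out is a two-line computation once you set it up that way; your even-indexed reorganization makes it needlessly awkward and, as submitted, it is only a promissory note.
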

\begin{proof}
It suffices to show that the coefficients in the right-hand sides of the equations (\ref{DHLOPP.sec.2.eq.2}) and (\ref{DHLOPP.sec.2.eq.3}) are non-negative and sum to $1$. Assume that $\delta$ is at least $4$. In that case, $\theta$ is non-zero and its inverse is
\begin{equation}\label{DHLOPP.sec.2.eq.2.lem.1.eq.1}
\frac{1}{\theta}=\frac{1}{k(\delta-1)+1}\Biggl(k(\delta-1)+\frac{1}{\bigl(k(1-\delta)\bigr)^{\!\sigma-1}}\Biggr)\!\mbox{.}
\end{equation}

Since $\sigma$ is positive,
\begin{equation}\label{DHLOPP.sec.2.eq.2.lem.1.eq.1.5}
\left|\frac{1}{\bigl(k(1-\delta)\bigr)^{\!\sigma-1}}\right|\leq1\mbox{.}
\end{equation}

It follows from (\ref{DHLOPP.sec.2.eq.2.lem.1.eq.1}) and (\ref{DHLOPP.sec.2.eq.2.lem.1.eq.1.5}) that $1/\theta$ and therefore $\theta$ are positive numbers. Hence, all the coefficients in the right-hand sides of (\ref{DHLOPP.sec.2.eq.2}) and (\ref{DHLOPP.sec.2.eq.3}) are non-negative, except possibly for the coefficient of $p^0$ in (\ref{DHLOPP.sec.2.eq.2}) and the coefficient of $q^0$ in (\ref{DHLOPP.sec.2.eq.3}). However, observe that (\ref{DHLOPP.sec.2.eq.2.lem.1.eq.1}) implies
$$
\frac{1}{\theta}\geq\frac{1}{k(\delta-1)+1}\Biggl(k(\delta-1)-\frac{1}{\bigl(k(\delta-1)\bigr)^{\!\sigma-1}}\Biggr)
$$
and, as a consequence,
$$
\theta\leq\frac{\bigl(k(\delta-1)+1\bigr)\bigl(k(\delta-1)\bigr)^{\!\sigma-1}}{\bigl(k(\delta-1)\bigr)^{\!\sigma}-1}\mbox{.}
$$

It follows that the coefficient of $p^0$ in the right-hand side of (\ref{DHLOPP.sec.2.eq.2}) is at least
$$
1-\frac{k(\delta-1)+1}{\bigl(k(\delta-1)-1\bigr)k(\delta-1)}\mbox{.}
$$

This expression is positive when $k(\delta-1)$ is greater than $2$. Hence, the coefficient of $p^0$ in the right-hand side of (\ref{DHLOPP.sec.2.eq.2}) is positive when $k$ is at least $4$. Likewise, the coefficient of $q^0$ in the right-hand side of (\ref{DHLOPP.sec.2.eq.3}) is at least
$$
\frac{1}{k(\delta-1)}+1-\frac{k(\delta-1)+1}{\bigl(k(\delta-1)-1\bigr)k(\delta-1)}
$$
which is positive as well when $k(\delta-1)$ is greater than $2$. Now observe that
$$
\sum_{i=1}^\sigma\frac{1}{\bigl(k(\delta-1)\bigr)^{\!i}}=\frac{\bigl(k(\delta-1)\bigr)^{\!\sigma}-1}{\bigl(k(\delta-1)-1\bigr)\bigl(k(\delta-1)\bigr)^{\!\sigma}}\mbox{.}
$$

Therefore, the coefficients in the right-hand side of (\ref{DHLOPP.sec.2.eq.2}) sum to $1$ and the coefficients in the right-hand side of (\ref{DHLOPP.sec.2.eq.3}) to
$$
\frac{k(\delta-1)+1}{k(\delta-1)}+\sum_{i=1}^\sigma\frac{\theta}{\bigl(k(1-\delta)\bigr)^{\!i}}\mbox{.}
$$

Finally, observe that
$$
\sum_{i=1}^\sigma\frac{\theta}{\bigl(k(1-\delta)\bigr)^{\!i}}=\theta\frac{\bigl(k(1-\delta)\bigr)^{\!\sigma}-1}{\bigl(k(1-\delta)-1\bigr)\bigl(k(1-\delta)\bigr)^{\!\sigma}}=\frac{1}{k(1-\delta)}\mbox{.}
$$

Hence, the coefficients in the right-hand side of (\ref{DHLOPP.sec.2.eq.2}) also sum to $1$.
\end{proof}

We are now ready to bound the distance between $P$ and $Q$. Note that the following theorem immediately implies Theorem \ref{DHLOPP.sec.2.thm.2}.

\begin{thm}\label{DHLOPP.sec.2.thm.4}
If $\delta$ is at least $4$, then
$$
d(P,Q)\leq\frac{\sqrt{\delta\sigma}}{\bigl(\!k(\delta-1)\bigr)^{\!\sigma}}\mbox{.}
$$
\end{thm}

\begin{proof}
According to Propositions \ref{DHLOPP.sec.2.eq.2.prop.1}, \ref{DHLOPP.sec.2.eq.2.prop.2}, and \ref{DHLOPP.sec.2.eq.2.prop.3}, the points $\overline{p}$ and $\overline{q}$ are contained in $P$ and $Q$, respectively. Therefore,
$$
d(P,Q)\leq{d(\overline{p},\overline{q})}\mbox{.}
$$

Now observe that, by construction,
$$
d(\overline{p},\overline{q})=\sqrt{\delta}d(p,q)\mbox{.}
$$

Hence, it suffices to show that
$$
d(p,q)\leq\frac{\sqrt{\sigma}}{\bigl(\!k(\delta-1)\bigr)^{\!\sigma}}\mbox{.}
$$

By (\ref{DHLOPP.sec.2.eq.2}) and (\ref{DHLOPP.sec.2.eq.3}), the first coordinate of $q-p$ is
$$
\begin{array}{l}
\displaystyle q_1-p_1=\frac{k(\delta-1)+1}{\delta{k}(\delta-1)}-\theta\frac{\bigl(k(\delta-1)\bigr)^{\!\sigma}-1}{\delta\bigl(k(\delta-1)-1\bigr)\bigl(k(\delta-1)\bigr)^{\!\sigma}}\hspace{2cm}\\[\bigskipamount]
\displaystyle\hfill+\frac{\theta}{\delta}\sum_{i=1}^\sigma\frac{1}{\bigl(k(\delta-1)\bigr)^{\!i}}+\frac{k(\delta-1)+1}{\delta}\sum_{i=1}^\sigma\frac{\theta}{\bigl(k(1-\delta)\bigr)^{\!i}}\mbox{.}
\end{array}
$$

However, since
$$
\sum_{i=1}^\sigma\frac{1}{\bigl(k(\delta-1)\bigr)^{\!i}}=\frac{\bigl(k(\delta-1)\bigr)^{\!\sigma}-1}{\bigl(k(\delta-1)-1\bigr)\bigl(k(\delta-1)\bigr)^{\!\sigma}}
$$
and
$$
\sum_{i=1}^\sigma\frac{\theta}{\bigl(k(1-\delta)\bigr)^{\!i}}=\frac{1}{k(1-\delta)}
$$
the first coordinate of $q-p$ is equal to $0$. According to (\ref{DHLOPP.sec.2.eq.2}) and (\ref{DHLOPP.sec.2.eq.3}) again, for any integer $j$ satisfying $1\leq{j}\leq\sigma$,
$$
q_{j+1}-p_{j+1}=\frac{1}{\delta}\frac{\theta}{\bigl(k(1-\delta)\bigr)^{j}}+\frac{k(\delta-1)+1}{\delta}\sum_{i=j+1}^\sigma\frac{\theta}{\bigl(k(1-\delta)\bigr)^{\!i}}
$$

However,
$$
\sum_{i=j+1}^\sigma\frac{1}{\bigl(k(1-\delta)\bigr)^{\!i}}=\frac{1-\bigl(k(1-\delta)\bigr)^{\!\sigma-j}}{(k(\delta-1)+1\bigr)\bigl(k(1-\delta)\bigr)^{\!\sigma}}
$$
and as a consequence,
$$
q_{j+1}-p_{j+1}=\frac{\theta}{\delta\bigl(k(1-\delta)\bigr)^{\!\sigma}}=\frac{k(\delta-1)+1}{\delta{k}(1-\delta)\Bigl(1-\bigl(k(1-\delta)\bigr)^{\!\sigma}\Bigr)}
$$

This quantity can be bounded as
$$
|q_j-p_j|\leq\frac{1}{(\delta-1)\Bigl(\bigl(k(\delta-1)\bigr)^{\!\sigma}-1\Bigr)}\leq\frac{1}{\bigl(k(\delta-1)\bigr)^{\!\sigma}}
$$
and therefore,
$$
d(p,q)\leq\frac{\sqrt{\sigma}}{\bigl(k(\delta-1)\bigr)^{\!\sigma}}
$$
as desired.
\end{proof}

\section{Special cases}\label{DHLOPP.sec.3}

From now on, $\varepsilon(d,k)$ denotes the smallest possible distance between two disjoint lattice $(d,k)$-polytopes. In this section, we focus on certain relevant special cases. The upper bounds stated in Section~\ref{DHLOPP.sec.2} imply that $\varepsilon(d,k)$ decreases exponentially fast with $d$ but these bounds only hold when $d$ is \emph{large enough}. We will prove a different bound that holds for all $d$ at least $2$, according to which $\varepsilon(d,1)$ is at most inverse linear as a function of $d$. We shall see in Section~\ref{DHLOPP.sec.4} that this bound on $\varepsilon(d,1)$ is tight when $d$ is equal to $2$ or $3$.

\begin{lem}\label{DHLOPP.sec.3.lem.1}
For any $d$ at least $2$,
$$
\varepsilon(d,1)\leq\frac{1}{\sqrt{d(d-1)}}\mbox{.}
$$
\end{lem}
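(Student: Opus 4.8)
The plan is to prove the bound by exhibiting an explicit pair of disjoint lattice $(d,1)$-polytopes whose distance is at most $1/\sqrt{d(d-1)}$. I would take $P$ to be the standard simplex $\mathrm{conv}\{e^1,\dots,e^{d-1}\}$ spanned by the first $d-1$ vectors of the canonical basis of $\mathbb{R}^d$, and $Q$ to be the main diagonal segment $\mathrm{conv}\{0,\mathbf{1}\}$ of the hypercube, where $\mathbf{1}=(1,\dots,1)$. Both $P$ and $Q$ are obviously lattice polytopes contained in $[0,1]^d$; for $d=2$ the polytope $P$ degenerates to the single vertex $\{e^1\}$, which is still a legitimate lattice polytope.

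First I would check disjointness. Every point of $P$ is a convex combination of $e^1,\dots,e^{d-1}$, hence has its last coordinate equal to $0$ and the sum of its first $d-1$ coordinates equal to $1$. Every point of $Q$ has the form $t\mathbf{1}$ with $t\in[0,1]$, so the only point of $Q$ whose last coordinate vanishes is the origin, and the origin does not lie in $P$ since its first $d-1$ coordinates sum to $0$, not $1$. Hence $P\cap Q=\emptyset$, and therefore $\varepsilon(d,1)\leq d(P,Q)$.

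Next I would bound $d(P,Q)$ from above by the distance between one well-chosen point in each polytope. Take the barycenter $b=\frac{1}{d-1}(e^1+\dots+e^{d-1})$ of $P$ and the point $c=\frac1d\mathbf{1}$ of $Q$ (this is a valid point of $Q$ since $\frac1d\in[0,1]$). Then $b-c$ has its first $d-1$ coordinates equal to $\frac{1}{d-1}-\frac1d=\frac{1}{d(d-1)}$ and its last coordinate equal to $-\frac1d$, so $\|b-c\|^2=(d-1)\cdot\frac{1}{d^2(d-1)^2}+\frac{1}{d^2}=\frac{1}{d^2(d-1)}+\frac{1}{d^2}=\frac{1}{d(d-1)}$. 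Consequently $d(P,Q)\leq\|b-c\|=1/\sqrt{d(d-1)}$, which is the claimed inequality.

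The computations are entirely routine, so there is no serious obstacle; the only real content is the choice of the pair $(P,Q)$. The observation to build the argument around is that the distance from the barycenter of the $(d-2)$-simplex $\mathrm{conv}\{e^1,\dots,e^{d-1}\}$ to the barycenter $\frac1d\mathbf{1}$ of the $(d-1)$-simplex $\mathrm{conv}\{e^1,\dots,e^d\}$ is exactly $1/\sqrt{d(d-1)}$, together with the lucky fact that this barycenter lies on the lattice segment $[0,\mathbf{1}]$, which one can separate from $P$. It is worth recording that this pair in fact realizes the true distance between $P$ and $Q$ (the pair $(b,c)$ being the closest pair, by a short variational check), which is consistent with the tightness of the bound for $d=2$ and $d=3$ announced for Section~\ref{DHLOPP.sec.4}; however, only the one-sided estimate above is needed here.
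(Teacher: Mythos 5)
Your proof is correct and takes essentially the same approach as the paper: the paper also exhibits the pair consisting of the main diagonal $\mathrm{conv}\{0,\mathbf{1}\}$ and the $(d-2)$-simplex $\mathrm{conv}\{e^1,\dots,e^{d-1}\}$ (with the roles of $P$ and $Q$ swapped), establishes disjointness by the same last-coordinate argument, and witnesses the bound via the same two points $\tfrac1d\mathbf{1}$ and $\tfrac1{d-1}(e^1+\dots+e^{d-1})$.
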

\begin{proof}
Let $P$ be the diagonal of the hypercube $[0,1]^d$ that is incident to the origin of $\mathbb{R}^d$. Denote by $Q$ the $(d-2)$-dimensional simplex whose vertices are the points $x$ of $\mathbb{R}^d$ whose one of the first $d-1$ coordinates is equal to $1$ and whose all other coordinates are equal to $0$. Note that $P$ and $Q$ are disjoint as the only point of $P$ whose last coordinate is equal to $0$ is the origin of $\mathbb{R}^d$.

The point $p$ of $\mathbb{R}^d$ whose all coordinates are equal to $1/d$ belongs to $P$. The centroid of $Q$ is the point $q$ whose last coordinate is $0$ and whose other coordinates are all equal to $1/(d-1)$. Since
$$
d(p,q)=\frac{1}{\sqrt{d(d-1)}},
$$
this proves the lemma.
\end{proof}

\begin{figure}[t]
\begin{centering}
\includegraphics[scale=1]{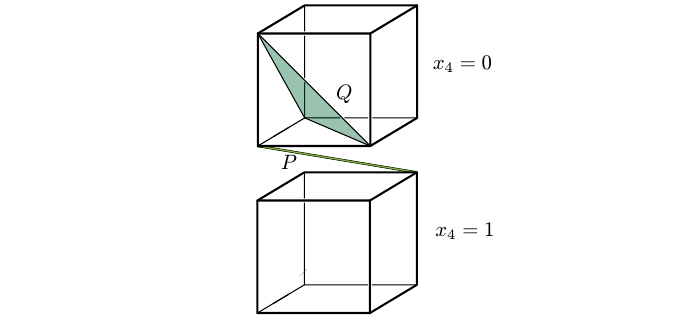}
\caption{The construction of Lemma \ref{DHLOPP.sec.3.lem.1} when $d$ is equal to~$4$. The cube at the top is the facet of the hypercube $[0,1]^4$ made up of the points $x$ such that $x_4=0$ and the cube at the bottom is the opposite facet of  $[0,1]^4$.}\label{DHLOPP.sec.3.fig.1}
\end{centering}
\end{figure}

We complement Lemma \ref{DHLOPP.sec.3.lem.1} by showing that $\varepsilon(d,k)$ is at most inverse linear as a function of $d$ and as a function of $k$ for all $d$ and $k$ at least $2$.

\begin{lem}\label{DHLOPP.sec.3.lem.2}
For any $k$ and $d$ at least $2$,
$$
\varepsilon(d,k)\leq\frac{1}{(d-1)k}\mbox{.}
$$
\end{lem}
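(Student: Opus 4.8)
The plan is to adapt the construction from the proof of Lemma~\ref{DHLOPP.sec.3.lem.1}, lengthening the diagonal segment so as to exploit the extra room available inside $[0,k]^d$ when $k\geq2$. Write $e_1,\ldots,e_d$ for the standard basis of $\mathbb{R}^d$. I would take $P$ to be the lattice segment with endpoints $0$ and $(k,\ldots,k,1)$, and $Q$ to be the $(d-2)$-dimensional simplex $\mathrm{conv}\{e_1,\ldots,e_{d-1}\}$; both are lattice $(d,k)$-polytopes since $k\geq2$.

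First I would check that $P$ and $Q$ are disjoint by separating them with a lattice hyperplane. Consider the linear form $\ell(x)=x_1+\cdots+x_{d-1}-(d-1)k\,x_d$. A generic point of $P$ has the form $(tk,\ldots,tk,t)$ with $t\in[0,1]$, so $\ell$ vanishes on $P$; a point of $Q$ is a convex combination of $e_1,\ldots,e_{d-1}$, hence has vanishing last coordinate while its first $d-1$ coordinates sum to $1$, so $\ell$ equals $1$ on $Q$. Thus $P$ and $Q$ lie in the distinct parallel hyperplanes $\{\ell=0\}$ and $\{\ell=1\}$ and are therefore disjoint.

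Then I would exhibit a point of $P$ and a point of $Q$ at distance $1/((d-1)k)$. On $P$, take the convex combination $p=t\,(k,\ldots,k,1)$ with $t=1/((d-1)k)$; since $(d-1)k\geq1$ this value of $t$ lies in $[0,1]$, and $p=(\tfrac1{d-1},\ldots,\tfrac1{d-1},\tfrac1{(d-1)k})$. On $Q$, take the centroid $q=\tfrac1{d-1}(e_1+\cdots+e_{d-1})=(\tfrac1{d-1},\ldots,\tfrac1{d-1},0)$. Then $p-q=\tfrac1{(d-1)k}\,e_d$, so $d(p,q)=1/((d-1)k)$, and the lemma follows from $\varepsilon(d,k)\leq d(P,Q)\leq d(p,q)$.

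I do not expect a genuine obstacle: the construction is explicit and the verifications are elementary. The only points needing a little care are confirming that $p$ is a legitimate convex combination (that is, $t\leq1$, which is where $(d-1)k\geq1$ is used) and that the centroid $q$ of $Q$ has its first $d-1$ coordinates all equal to $\tfrac1{d-1}$, so that $p-q$ points purely in the $e_d$ direction. If a sharper bound were wanted one could instead minimize $d(\,\cdot\,,q)$ over the whole segment $P$, obtaining $d(P,Q)\leq 1/\sqrt{(d-1)^2k^2+(d-1)}$, which at $k=1$ recovers the bound of Lemma~\ref{DHLOPP.sec.3.lem.1}; but the particular choice of $t$ above already yields the stated inequality with no computation.
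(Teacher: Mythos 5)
Your proof is correct, and it takes a genuinely different route from the paper's. The paper takes $P$ to be the single point $(1,\ldots,1)$ and $Q$ to be the $(d-1)$-simplex with vertices at the origin and at the $d-1$ points having one of the first $d-1$ coordinates equal to $k-1$ and the rest equal to $k$; it then writes down, by inspection, a fairly opaque explicit point $q$ in $Q$ and computes $d(P,q)=1/\sqrt{(d-1)k^2+((d-1)k-1)^2}$, checking at the end that this is at most $1/((d-1)k)$ using $k\geq2$. Your construction instead stretches the diagonal of Lemma~\ref{DHLOPP.sec.3.lem.1}: $P$ is the segment from $0$ to $(k,\ldots,k,1)$ and $Q$ is the same $(d-2)$-simplex $\mathrm{conv}\{e_1,\ldots,e_{d-1}\}$ as there. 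This has two small advantages: the disjointness certificate is a clean lattice hyperplane argument (the functional $\ell$) rather than an implicit one, and the construction literally specializes to the Lemma~\ref{DHLOPP.sec.3.lem.1} construction at $k=1$, so the two lemmas become instances of a single family. As you note, minimizing $d(\cdot,q)$ over the segment gives the exact value $1/\sqrt{(d-1)^2k^2+(d-1)}$ for this pair, which recovers $1/\sqrt{d(d-1)}$ at $k=1$; your one-sided choice $t=1/((d-1)k)$ is enough for the stated bound and avoids the optimization. One tiny nit: the phrase ``both are lattice $(d,k)$-polytopes since $k\geq2$'' isn't quite the right justification --- they are lattice $(d,k)$-polytopes for every $k\geq1$ because all coordinates lie in $\{0,1,\ldots,k\}$; the hypothesis $k\geq2$ is never actually needed in your argument, in contrast to the paper's.
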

\begin{proof}
Let $P$ denote the point of $\mathbb{R}^d$ whose all coordinates are equal to $1$. Denote by $Q$ the $(d-1)$-dimensional simplex whose vertices are the origin of $\mathbb{R}^d$ and the points whose one of the first $d-1$ coordinates is equal to $k-1$ and whose all other coordinates are equal to $k$. Now consider the point $q$ such that
$$
q_i=1-\frac{k}{(d-1)k^2+\bigl((d-1)k-1\bigr)^2}
$$
when $1\leq{i}\leq{d-1}$ and
$$
q_d=1+\frac{(d-1)k-1}{(d-1)k^2+\bigl((d-1)k-1\bigr)^2}\mbox{.}
$$

This point is the convex combination of the vertices of $Q$ where the coefficient of the origin is $1-q_d/k$ and the coefficient of all the other vertices of $Q$ is $q_d/(k(d-1))$. The distance of $P$ and $q$ is
$$
d(P,q)=\frac{1}{\sqrt{(d-1)k^2+\bigl((d-1)k-1\bigr)^2}}\mbox{.}
$$

It suffices to observe that
$$
\frac{1}{\sqrt{(d-1)k^2+\bigl((d-1)k-1\bigr)^2}}\leq\frac{1}{(d-1)k}
$$
when $k\geq2$ in order to complete the proof.
\end{proof}

Let us now turn our attention to the case when the dimensions of $P$ and $Q$ are fixed  independently on the dimension of the ambient space as, for example when $P$ and $Q$ are two line segments that live in a higher dimensional space.

We recall that the dimension of a non-necessarily convex subset of $\mathbb{R}^d$ is defined as the dimension of its affine hull.

\begin{lem}\label{DHLOPP.sec.3.lem.3}
For any two disjoint lattice $(d,k)$-polytopes $P$ and $Q$,
$$
d(P,Q)\geq\varepsilon\bigl(\mathrm{dim}(P\cup{Q}),k\bigr)\mbox{.}
$$
\end{lem}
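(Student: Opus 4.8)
The plan is to compress $P$ and $Q$ into dimension $m=\mathrm{dim}(P\cup Q)$ by a coordinate projection that keeps them lattice polytopes inside a hypercube, keeps them disjoint, and does not increase their distance; the definition of $\varepsilon(m,k)$ as a minimum over all such pairs then gives the inequality.

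First I would let $A=\mathrm{aff}(P\cup Q)$ and let $L$ be the $m$-dimensional linear subspace of $\mathbb{R}^d$ parallel to $A$. Since $\mathrm{aff}(P\cup Q)$ is the affine hull of the finite set consisting of the vertices of $P$ together with those of $Q$, one can select among these vertices $m+1$ affinely independent points $v^0,\ldots,v^m$; then $v^1-v^0,\ldots,v^m-v^0$ form a basis of $L$, so the $d\times m$ matrix having these vectors as columns has rank $m$. Consequently there is a set $S$ of $m$ coordinates for which the $m\times m$ submatrix keeping only the rows indexed by $S$ is invertible. Writing $\pi\colon\mathbb{R}^d\to\mathbb{R}^m$ for the projection forgetting the coordinates outside $S$, this says precisely that $\pi$ restricted to $L$, and hence to the affine subspace $A$, is a bijection.

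Next I would verify that $\pi(P)$ and $\pi(Q)$ are disjoint lattice $(m,k)$-polytopes of distance at most $d(P,Q)$. Their vertices lie among the $\pi$-images of the vertices of $P$, respectively of $Q$, hence in $\mathbb{Z}^m$; and since $\pi([0,k]^d)=[0,k]^m$, both are contained in $[0,k]^m$. They are disjoint because a common point would be the $\pi$-image of a point of $P\subseteq A$ and of a point of $Q\subseteq A$, forcing these points to coincide by injectivity of $\pi$ on $A$, contradicting $P\cap Q=\emptyset$. Finally $\pi$ is $1$-Lipschitz, so $\|\pi(x)-\pi(y)\|\le\|x-y\|$ for all $x,y$; minimizing over $x\in P$ and $y\in Q$ yields $d(\pi(P),\pi(Q))\le d(P,Q)$, and since $d(\pi(P),\pi(Q))\ge\varepsilon(m,k)$ by definition of $\varepsilon$, the proof is complete.

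The only step requiring any care is the existence of the coordinate subset $S$ making $\pi$ injective on $A$; this is just the standard fact that a matrix of rank $m$ has a nonvanishing $m\times m$ minor, and it is exactly what lets a bare coordinate projection---rather than some ambient isometry---do the job. All the remaining claims are immediate, since coordinate projections automatically respect both the integer lattice and the hypercube while never increasing Euclidean distances.
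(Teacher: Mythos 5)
Your argument is correct and uses essentially the same idea as the paper: project onto a suitable subset of coordinates, which automatically preserves the lattice and the hypercube, keeps the polytopes disjoint (by injectivity on the affine hull), and never increases Euclidean distance. The paper drops one coordinate at a time by induction on $d-\dim(P\cup Q)$ after a suitable permutation, while you select all $m=\dim(P\cup Q)$ surviving coordinates at once via a nonvanishing $m\times m$ minor; the underlying mechanism is identical.
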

\begin{proof}
The proof is by induction on $d-\mathrm{dim}(P\cup{Q})$. If this quantity is equal to $0$, then the result is immediate. Let us assume that $d$ is greater than the dimension of $P\cup{Q}$. In that case, there exists an hyperplane $H$ of $\mathbb{R}^d$ that contains $P$ and $Q$. Identify $\mathbb{R}^{d-1}$ with the subspace of $\mathbb{R}^d$ spanned by the first $d-1$ coordinates. We can assume that the vectors orthogonal to $H$ do not belong to $\mathbb{R}^{d-1}$ by, if needed using an adequate permutation of the coordinates of $\mathbb{R}^d$. Now consider the orthogonal projection $\pi:\mathbb{R}^d\rightarrow\mathbb{R}^{d-1}$. Since the vectors orthogonal to $H$ do not belong to $\mathbb{R}^{d-1}$, the restriction of $\pi$ to $H$ is a bijection between $H$ and $\mathbb{R}^{d-1}$. Moreover, $\pi(\mathbb{Z}^d\cap{H})$ is a subset of $\mathbb{Z}^{d-1}$. Hence, $\pi(P)$ and $\pi(Q)$ are two disjoint lattice $(d-1,k)$-polytopes and the dimension of $\pi(P)\cup\pi(Q)$ coincides with the dimension of $P\cup{Q}$. In particular,
$$
d-1-\mathrm{dim}\bigl(\pi(P)\cup\pi(Q)\bigr)=d-\mathrm{dim}\bigl(P\cup{Q}\bigr)-1\mbox{.}
$$

By induction,
\begin{equation}\label{DHLOPP.sec.3.lem.3.eq.1}
d\bigl(\pi(P),\pi(Q)\bigr)\geq\varepsilon\Bigl(\mathrm{dim}\bigl(\pi(P)\cup\pi(Q)\bigr),k\Bigr)=\varepsilon\bigl(\mathrm{dim}(P\cup{Q}),k\bigr)\mbox{.}
\end{equation}

Finally observe that the distance between two points in $H$ is always at least the distance between their images by $\pi$. Therefore,
$$
d(P,Q)\geq{d\bigl(\pi(P),\pi(Q)\bigr)}
$$
and combining this with (\ref{DHLOPP.sec.3.lem.3.eq.1}) proves the lemma.
\end{proof}

We will see in Section \ref{DHLOPP.sec.4} that $\varepsilon(3,1)$ is equal to $1/\sqrt{6}$ (see for instance Table~\ref{DHLOPP.sec.4.table.1}) and that this distance is achieved between a diagonal of the cube $[0,1]^3$ and a diagonal of one of its square faces. An immediate consequence of Lemma \ref{DHLOPP.sec.3.lem.3} is that this holds independently on the dimension of the ambient space.

\begin{thm}\label{DHLOPP.sec.3.thm.2}
The smallest possible distance between two disjoint line segments whose vertices belong to $\{0,1\}^d$ is $1/\sqrt{6}$.
\end{thm}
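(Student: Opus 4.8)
The plan is to obtain the theorem from \lemref{DHLOPP.sec.3.lem.3} and the exact value of $\varepsilon(3,1)$ computed in Section~\ref{DHLOPP.sec.4}, together with one explicit extremal pair.

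For the lower bound, let $P$ and $Q$ be disjoint line segments with vertices in $\{0,1\}^d$. The key remark is that $\mathrm{dim}(P\cup{Q})\leq3$: fixing a vertex $a$ of $P$ and a vertex $b$ of $Q$, the affine hull of $P\cup{Q}$ is the translate by $a$ of the linear span of the direction of $P$, the direction of $Q$, and the vector $b-a$, hence of dimension at most $3$. By \lemref{DHLOPP.sec.3.lem.3}, $d(P,Q)\geq\varepsilon\bigl(\mathrm{dim}(P\cup{Q}),1\bigr)$, and since padding a lattice $(m,1)$-polytope with a zero coordinate turns it into a lattice $(m+1,1)$-polytope, the function $m\mapsto\varepsilon(m,1)$ is non-increasing. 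Therefore $d(P,Q)\geq\varepsilon(3,1)$, and we invoke Section~\ref{DHLOPP.sec.4} (see Table~\ref{DHLOPP.sec.4.table.1}) for the equality $\varepsilon(3,1)=1/\sqrt{6}$.

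For the matching upper bound, assume $d\geq3$ and take $P$ to be the segment with vertices $0$ and $(1,1,1,0,\ldots,0)$---a diagonal of a $3$-dimensional face of $[0,1]^d$---and $Q$ the segment with vertices $(1,0,0,\ldots,0)$ and $(0,1,0,\ldots,0)$, a diagonal of a square face of that cube. A point of $P$ reads $(s,s,s,0,\ldots,0)$ with $s\in[0,1]$ and a point of $Q$ reads $(1-t,t,0,\ldots,0)$ with $t\in[0,1]$; equality would force $s=0$, then $t=0$, then $0=1$, so $P$ and $Q$ are disjoint. Minimizing $(s-1+t)^2+(s-t)^2+s^2$ over $[0,1]^2$ yields $t=1/2$, $s=1/3$ and minimum value $1/6$, so that $d(P,Q)=1/\sqrt{6}$, attained between $(1/3,1/3,1/3,0,\ldots,0)$ and $(1/2,1/2,0,\ldots,0)$; this is the configuration depicted in Figure~\ref{DHLOPP.sec.1.fig.1}. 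Combining the two bounds gives the claimed value.

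There is no genuinely hard step here once \lemref{DHLOPP.sec.3.lem.3} and the value $\varepsilon(3,1)=1/\sqrt{6}$ are in hand; the only things to watch are that the bound $\mathrm{dim}(P\cup{Q})\leq3$ is exactly what makes the statement independent of the ambient dimension $d$---and that it is sharp, since two skew segments can span dimension $3$, so the reduction cannot descend below $\varepsilon(3,1)$---and the routine check that the exhibited pair of segments is disjoint and realizes distance $1/\sqrt{6}$.
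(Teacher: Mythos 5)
Your proposal is correct and follows the same route as the paper: both rely on \lemref{DHLOPP.sec.3.lem.3} together with the observation that two segments span at most a $3$-dimensional affine subspace, reduce to $\varepsilon(3,1)=1/\sqrt{6}$ from Table~\ref{DHLOPP.sec.4.table.1}, and exhibit the cube-diagonal/face-diagonal pair as the witness. You simply spell out the dimension bound, the monotonicity of $\varepsilon(\cdot,1)$, and the minimization that the paper leaves implicit.
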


\section{Computational aspects}\label{DHLOPP.sec.4}

In this section, we are interested in computing the explicit value of $\varepsilon(d,k)$, the smallest between two disjoint lattice $(d,k)$-polytopes. A strategy is to enumerate all possible pairs of disjoint lattice $(d,k)$\nobreakdash-polytopes. Let us give some properties of that allow to reduce the search space. 

By its definition, $\varepsilon(d,k)$ is a non-increasing function of $d$ for all fixed $k$. We can prove the following stronger statement.

\begin{thm}\label{DHLOPP.sec.3.thm.1}
$\varepsilon(d,k)$ is a decreasing function of $d$ for all fixed $k$.
\end{thm}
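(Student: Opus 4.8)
The plan is to prove the strictly stronger inequality $\varepsilon(d+1,k)<\varepsilon(d,k)$ for every $d\geq1$. Since there are only finitely many lattice $(d,k)$-polytopes, the value $\varepsilon(d,k)$ is attained by some disjoint pair $P$, $Q$; fix such a pair together with points $p\in P$ and $q\in Q$ realizing $d(P,Q)=\varepsilon(d,k)$, and set $n=q-p\neq0$. Since $p$ is the nearest point of $P$ to $q$ and $q$ the nearest point of $Q$ to $p$, the projection inequalities give $n\cdot x\leq n\cdot p$ for all $x\in P$ and $n\cdot x\geq n\cdot q$ for all $x\in Q$, while $n\cdot q-n\cdot p=\|n\|^2>0$. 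The strategy is to embed everything into $\mathbb{R}^{d+1}$ as the hyperplane $x_{d+1}=0$ and enlarge one of the two polytopes by a single lattice vertex lying at height $1$, chosen so as to pull it strictly closer to the other one.

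The key step is the following separation claim: there is a lattice point $p^{\ast}$ of $[0,k]^d$ with $n\cdot p^{\ast}>n\cdot p$, or a lattice point $q^{\ast}$ with $n\cdot q^{\ast}<n\cdot q$. To see this, let $M$ and $m$ be the maximum and minimum of $x\mapsto n\cdot x$ over $[0,k]^d$; both are attained at lattice points and $M-m=k\|n\|_1\geq0$. If neither $p^{\ast}$ nor $q^{\ast}$ existed, then $n\cdot x\leq n\cdot p$ would hold at every lattice point, whence $M=n\cdot p$; likewise $m=n\cdot q$; and then $M-m=n\cdot p-n\cdot q=-\|n\|^2<0$, a contradiction. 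Exchanging the roles of $P$ and $Q$ if necessary (which replaces $n$ by $-n$), we may assume that $p^{\ast}$ exists.

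I would then define, in $\mathbb{R}^{d+1}$,
$$
P'=\mathrm{conv}\bigl((P\times\{0\})\cup\{(p^{\ast},1)\}\bigr),\qquad Q'=Q\times\{0\}.
$$
Because $k\geq1$, both are lattice $(d+1,k)$-polytopes. They are disjoint: a point of $P'$ with last coordinate $0$ necessarily lies in $P\times\{0\}$, which is disjoint from $Q\times\{0\}$, and every other point of $P'$ has positive last coordinate. Finally, for $s\in[0,1]$ consider the point $z(s)=\bigl((1-s)p+s p^{\ast},\,s\bigr)$ of $P'$ and the point $(q,0)$ of $Q'$. Their squared distance is $g(s)=\|(p-q)+s(p^{\ast}-p)\|^2+s^2$, which satisfies $g(0)=\varepsilon(d,k)^2$ and
$$
g'(0)=2(p-q)\cdot(p^{\ast}-p)=-2\bigl(n\cdot p^{\ast}-n\cdot p\bigr)<0,
$$
so $g(s)<\varepsilon(d,k)^2$ for all sufficiently small $s>0$. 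Hence $d(P',Q')<\varepsilon(d,k)$, and therefore $\varepsilon(d+1,k)<\varepsilon(d,k)$.

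Everything except the separation claim is a routine verification: the projection inequalities, the identity $M-m=k\|n\|_1$, the disjointness of $P'$ and $Q'$, and the sign of $g'(0)$. The separation claim is the heart of the proof; it is where disjointness of $P$ and $Q$ enters quantitatively, through the strict inequality $n\cdot q-n\cdot p=\|n\|^2>0$, to exclude the one configuration that would obstruct the construction: that in which the minimizing pair already touches both extreme supporting hyperplanes of $[0,k]^d$ in the direction $n$, leaving no lattice point available as a lifting vertex.
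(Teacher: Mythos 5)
Your proof is correct, and the core mechanism — lift a lattice point to height $1$ in an extra coordinate, attach it to one of the two polytopes, and observe that the squared distance to the fixed nearest point of the other polytope has strictly negative derivative at the start of the interpolation — is the same as the paper's. The difference is in how you produce a usable lifting vertex. You prove a separation claim asserting the existence of a lattice point $p^{\ast}\in[0,k]^d$ with $n\cdot p^{\ast}>n\cdot p$ (or symmetrically a $q^{\ast}$), then lift that single point; this needs disjointness (via $\|n\|^2>0$) together with the extremal-value argument on $[0,k]^d$ to rule out the obstruction. The paper avoids the separation claim altogether by lifting \emph{all} of $P$: it sets $\phi(x)=(x,1)$ and defines $Q'=\mathrm{conv}\bigl(\phi(P)\cup Q\bigr)$, so that the lifted nearest point $\phi(p)$ is automatically available in $Q'$ even though $p$ itself need not be a lattice point. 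Then $\delta(\lambda)=(1-\lambda)^2 d(p,q)^2+\lambda^2$ with $\delta'(0)=-2\,d(p,q)^2<0$ immediately gives $d(P,Q')<\varepsilon(d-1,k)$. Both constructions are valid; the paper's is a bit leaner because the whole-polytope lift makes the "where does the lifting vertex come from?" question moot, whereas your version proves a small auxiliary lemma to get it. If you wanted to streamline your writeup, lifting all of $P$ (or of $P$'s vertex set) rather than a single $p^{\ast}$ would let you drop the separation claim entirely.
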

\begin{proof}
Let us identify $\mathbb{R}^{d-1}$ with the subspace of $\mathbb{R}^d$ spanned by the first $d-1$ coordinates. Consider two lattice $(d-1,k)$-polytopes $P$ and $Q$ such that $d(P,Q)$ is equal to $\varepsilon(d-1,k)$. Now consider the map $\phi:\mathbb{R}^{d-1}\rightarrow\mathbb{R}^d$ such that $\phi(x)$ is the point of $\mathbb{R}^d$ obtained from $x$ by changing its last coordinate to $1$.

Now consider the lattice $(d,k)$-polytope
$$
Q'=\mathrm{conv}\bigl(\phi(P)\cup{Q}\bigr)\mbox{.}
$$

Denote by $p$ and $q$ a point in $P$ and a point in $Q$ whose distance is equal to $\varepsilon(d-1,k)$. By construction, both $q$ and $\phi(p)$ belong to $Q'$. Now consider a number $\lambda$ in the interval $[0,1]$ and denote by $\delta$ the squared distance between the points $p$ and $\lambda\phi(p)+(1-\lambda)q$. It should be noted that $\delta$ coincides with $d(p,q)^2$ when $\lambda$ is equal to $0$. Observe that
$$
\delta=(1-\lambda)^2d(p,q)^2+\lambda^2\mbox{.}
$$

Differentiating this equality with respect to $\lambda$ yields
$$
\frac{\partial\delta}{\partial\lambda}=2\lambda\Bigl(1+d(p,q)^2\Bigr)-2d(p,q)^2\mbox{.}
$$

Note that this derivative is negative for all $\lambda$ close enough to $0$. In particular, one can find a value of $\lambda$ such that $\delta$ is less than $d(p,q)^2$. As $\delta$ is the squared distance between $p$ and a point in $Q'$, this show that
$$
d(P,Q')<d(p,q)
$$

Since the right-hand side of this inequality is equal to $\varepsilon(d-1,k)$ and its left hand side is at least $\varepsilon(d,k)$, this proves the lemma.
\end{proof}

By the following theorem, in order to compute $\varepsilon(d,k)$ by enumerating all possible pairs of lattice $(d,k)$\nobreakdash-polytopes, one only needs to consider pairs of disjoint simplices whose dimensions sum to $d-1$.

\begin{thm}\label{DHLOPP.sec.4.lem.1}
There exist two lattice $(d,k)$-polytopes $P$ and $Q$ such that
\begin{enumerate}
\item[(i)] $d(P,Q)$ is equal to $\varepsilon(d,k)$,
\item[(ii)] both $P$ and $Q$ are simplices,
\item[(iii)] $\mathrm{dim}(P)+\mathrm{dim}(Q)$ is equal to $d-1$, and
\item[(iv)] the affine hulls of $P$ and $Q$ are disjoint.
\end{enumerate}
\end{thm}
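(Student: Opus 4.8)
The plan is to start from an arbitrary optimal pair and shrink it as much as possible, then read the four conclusions off the resulting optimality conditions. Since there are finitely many lattice $(d,k)$-polytopes, one can choose a pair $(P,Q)$ of disjoint lattice $(d,k)$-polytopes with $d(P,Q)=\varepsilon(d,k)$ that minimizes $|\vertex(P)|+|\vertex(Q)|$, and fix $p\in P$ and $q\in Q$ with $d(p,q)=\varepsilon(d,k)$. I would first argue that this forces $P$ and $Q$ to be simplices and $p$, $q$ to lie in their relative interiors. If the nearest point $p$ were in a proper face $F$ of $P$, then $F$ is a lattice $(d,k)$-polytope with strictly fewer vertices than $P$ (a proper face omits at least one vertex), and $d(F,Q)=\varepsilon(d,k)$, since $p\in F$ gives $d(F,Q)\le d(p,Q)=d(P,Q)$ while $F\subseteq P$ gives the reverse inequality; this contradicts minimality, so $p\in\mathrm{relint}(P)$, and likewise $q\in\mathrm{relint}(Q)$. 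If in addition $P$ were not a simplex, I would triangulate $P$ using only its vertices and replace $P$ by the maximal simplex of that triangulation containing $p$: this is again a lattice $(d,k)$-simplex at distance $\varepsilon(d,k)$ from $Q$, with fewer vertices, a contradiction. Hence $P$ and $Q$ are simplices, establishing (ii), with $p\in\mathrm{relint}(P)$ and $q\in\mathrm{relint}(Q)$.

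Write $U$ and $V$ for the direction subspaces of $\mathrm{aff}(P)$ and $\mathrm{aff}(Q)$. First-order optimality of $p$ and $q$ — each minimizes the squared distance to a fixed point while lying in the relative interior of a polytope — gives $p-q\perp U$ and $p-q\perp V$, hence $p-q\perp U+V$; as $p-q\ne 0$, this yields $p-q\notin U+V$. Writing $p=u^0+u$ and $q=v^0+v$ with $u\in U$, $v\in V$ for a vertex $u^0$ of $P$ and a vertex $v^0$ of $Q$, we get $w^0:=u^0-v^0=(p-q)-(u-v)\notin U+V$, which is exactly the assertion that $\mathrm{aff}(P)\cap\mathrm{aff}(Q)=\emptyset$, i.e.\ (iv), and also shows that the direction subspace of $\mathrm{aff}(P\cup Q)$ is $(U+V)+\mathbb{R}w^0$, so $\dim(P\cup Q)=\dim(U+V)+1$. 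On the other hand, Lemma~\ref{DHLOPP.sec.3.lem.3} gives $\varepsilon(d,k)=d(P,Q)\ge\varepsilon\bigl(\dim(P\cup Q),k\bigr)$, and since $\varepsilon(\cdot,k)$ is strictly decreasing by Theorem~\ref{DHLOPP.sec.3.thm.1} and $\dim(P\cup Q)\le d$, this forces $\dim(P\cup Q)=d$, hence $\dim(U+V)=d-1$.

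Because $\dim P+\dim Q=\dim(U+V)+\dim(U\cap V)=(d-1)+\dim(U\cap V)$, the final point (iii) is equivalent to $U\cap V=\{0\}$, and this is the crux of the argument. I would prove it by contradiction with a sliding construction: given a nonzero $w\in U\cap V$, the facts that $p\in\mathrm{relint}(P)$, $q\in\mathrm{relint}(Q)$ and $w$ lies in both direction spaces imply that $p+sw$ and $q+sw$ stay in $P$ and $Q$ for $s$ ranging over a bounded interval containing $0$ in its interior. Let $s^*>0$ be the smallest value at which one of them, say $p+s^*w$, reaches the relative boundary of its polytope, and let $F$ be the proper face of $P$ whose relative interior contains $p+s^*w$. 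Then $F$ is a simplex with fewer vertices than $P$, it is disjoint from $Q$ (being contained in $P$), and $\|(p+s^*w)-(q+s^*w)\|=\|p-q\|=\varepsilon(d,k)$ forces $d(F,Q)=\varepsilon(d,k)$; so $(F,Q)$ is an optimal pair with strictly fewer vertices than $(P,Q)$, a contradiction. Therefore $U\cap V=\{0\}$ and $\dim P+\dim Q=d-1$, which is (iii).

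I expect the sliding step for (iii) to be the main obstacle: unlike the reductions used for (ii) and (iv), one cannot kill a common direction of $\mathrm{aff}(P)$ and $\mathrm{aff}(Q)$ by intersecting $Q$ with an affine subspace, since the resulting slice need not have integral vertices; the point of sliding along the common direction is that it keeps the distance exactly equal to $\varepsilon(d,k)$, so that the first face one runs into is automatically an optimal, lower-dimensional lattice simplex. One should also verify — routinely — that every replacement performed (passing to a face, to a simplex of a triangulation, or to the face exposed by sliding) stays inside $[0,k]^d$ and has integral vertices, so that it is a legitimate competitor in the minimization.
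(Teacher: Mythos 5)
Your proposal is correct and follows essentially the same route as the paper: minimize the combined vertex count among optimal pairs, deduce that $p,q$ lie in relative interiors of simplices (the paper invokes Carath\'eodory where you triangulate, but these are interchangeable), use orthogonality of $p-q$ to the direction spaces for (iv), use Lemma~\ref{DHLOPP.sec.3.lem.3} together with the strict monotonicity of Theorem~\ref{DHLOPP.sec.3.thm.1} for the dimension lower bound, and use the same ``sliding along a common direction'' contradiction to cap $\dim P+\dim Q$ at $d-1$. The paper phrases the slide in barycentric coordinates (Cramer-style coefficients $\gamma_u$ and a minimizing ratio $\lambda$) whereas you phrase it geometrically via the first boundary hit, but these are the same argument.
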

\begin{proof}
Consider two disjoint lattice $(d,k)$-polytopes $P$ and $Q$ such that $d(P,Q)$ is equal to $\varepsilon(d,k)$. Among all possible such pairs of polytopes, we choose $P$ and $Q$ in such a way that their number of vertices sum to the smallest possible number. We shall prove that a consequence of this choice is that $P$ and $Q$ satisfy assertions (ii) and (iii) in the statement of the lemma.

Consider a point $p$ in $P$ and a point $q$ in $Q$ such that $d(p,q)$ is equal to $d(P,Q)$. By Carath{\'e}odory's theorem, $p$ is a convex combination of a set $S_P$ of at most $\mathrm{dim}(P)+1$ affinely independent vertices of $P$. Moreover, we can choose $S_P$ in such a way that all the points it contains have a positive coefficient in that convex combination. Equivalently, $p$ lies in the relative interior of $\mathrm{conv}(S_P)$. In that case, $\varepsilon(d,k)$ is achieved as the distance between $\mathrm{conv}(S_P)$ and $Q$. It then follows from the above choice for $P$ and $Q$ that $S_P$ must is precisely the vertex set of $P$. As a consequence, $P$ is a simplex that contains $p$ in its relative interior. By the same argument, $Q$ is a simplex a well and $q$ lies in its relative interior which proves assertion (ii).

Let us now turn our attention to assertion (iii). First observe that if $\mathrm{dim}(P)+\mathrm{dim}(Q)$ is less than $d-1$, then $\mathrm{dim}(P\cup{Q})$ is at most $d-1$ and by Lemma \ref{DHLOPP.sec.3.lem.3}, $d(P,Q)\geq\varepsilon(d-1,k)$, which would contradict Theorem \ref{DHLOPP.sec.3.thm.1} because $d(P,Q)$ is equal to $\varepsilon(d,k)$. This shows that $\mathrm{dim}(P)+\mathrm{dim}(Q)$ is at least $d-1$. Let us now show that the opposite inequality holds.

By convexity, one can associate a positive number $\alpha_u$ with each point in $S_P\cup{S_Q}$ in such a way that these numbers collectively satisfy
$$
\left\{
\begin{array}{l}
\displaystyle\sum_{u\in{S_P}}\alpha_uu=p\mbox{,}\\[\bigskipamount]
\displaystyle\sum_{u\in{S_P}}\alpha_u=1\mbox{,}\\
\end{array}\right.
$$
and the same equalities hold when $S_P$ is replaced by $S_Q$ and $p$ by $q$. Now consider a vertex $v_P$ of $P$, a vertex $v_Q$ of $Q$. As $P$ and $Q$ are simplices, the sets
$$
S'_P=\Bigl\{u-v_P:u\in{S_P\mathord{\setminus\{v_P\}}}\Bigr\}
$$
and
$$
S'_Q=\Bigl\{u-v_Q:u\in{S_Q\mathord{\setminus\{v_Q\}}}\Bigr\}\mbox{.}
$$
are linearly independent. Further observe that all the vectors they contain are orthogonal to $p-q$. Hence, these vectors collectively span a linear subspace $M$ of $\mathbb{R}^d$ of dimension at most $d-1$. Assume for contradiction that the dimensions of $P$ and $Q$ sum to at least $d$. In that case, the dimensions of the subspaces of $M$ spanned by $S'_P$ and by $S'_Q$ also sum to at least $d$ and the intersection of these subspaces has dimension at least one. Let $x$ be a non-zero point in that intersection. This point can be expressed as a linear combination of $S'_P$: one can associate each point $u$ in $S_P\mathord{\setminus}\{v_P\}$ with a number $\beta_u$ such that
$$
\sum_{u\in{S_P}\mathord{\setminus}\{v_P\}}\beta_u(u-v_P)=x\mbox{.}
$$

As $x$ is non-zero, the coefficients in the left-hand side of this equality cannot all be equal to zero. For any $u$ in $S_P$, denote $\gamma_u=\beta_u$ when $u\neq{v_P}$ and
$$
\gamma_u=-\sum_{u\in{S_P}\mathord{\setminus}\{v_P\}}\beta_u
$$
when $u=v_P$. With these notations,
\begin{equation}\label{DHLOPP.sec.4.lem.1.eq.1}
\sum_{u\in{S_P}}\gamma_u=0
\end{equation}
and
\begin{equation}\label{DHLOPP.sec.4.lem.1.eq.2}
\sum_{u\in{S_P}}\gamma_uu=x\mbox{.}
\end{equation}

Likewise, one can associate each point $u$ in $S_Q$ with a number $\gamma_u$ such that (\ref{DHLOPP.sec.4.lem.1.eq.1}) and (\ref{DHLOPP.sec.4.lem.1.eq.1}) still hold when replacing $S_P$ by $S_Q$.

Now consider the number
$$
\lambda=\mathrm{min}\left\{\frac{\alpha_u}{\gamma_u}:u\in{S_P}\cup{S_Q},\,\gamma_u>0\right\}
$$

It follows from this choice for $\lambda$ that the point $p-\lambda{x}$ is still contained in $P$ because the coefficients of is decomposition into an affine combination of $S_P$ all remain non-negative. Likewise $q-\lambda{x}$ still belongs to $Q$. Further observe that the distance between $p-\lambda{x}$ and $q-\lambda{x}$ is still equal to $\varepsilon(d,k)$. However, also by our choice for $\lambda$, at least one of the coefficients in the expression of $p-\lambda{x}$ as a convex combination of $S_P$ or in the expression of $q-\lambda{x}$ as a convex combination of $S_Q$ must vanish. In other words, $\varepsilon(d,k)$ is achieved by a pair of disjoint lattice simplices whose combined number of vertices is less than that of $P$ and $Q$. This contradicts the assumption that $P$ and $Q$ have the smallest combined number of vertices among the pairs of disjoint lattice $(d,k)$-polytopes whose distance is equal to $\varepsilon(d,k)$, which proves assertion (iii).

Finally, in order to prove (iv), observe that the affine hulls of $P$ and $Q$ are contained in two hyperplanes of $\mathbb{R}^d$ orthogonal to $p-q$. These two hyperplanes are disjoint because they are parallel and one of them contains $p$ while the other contains $q$. As a consequence, (iv) holds, as desired.
\end{proof}

Using Theorem \ref{DHLOPP.sec.4.lem.1}, one can compute $\varepsilon(d,k)$. This is done in practice by generating all the subsets of at most $d$ points from $\{0,1,\ldots,k\}^d$, by computing the dimension of their affine hull, and by discarding the subsets such that this dimension is less by at least $2$ than the number of points they contain. Then, all the possible pairs of these subsets are considered whose affine hull dimensions sum to $d-1$ and the distance of their convex hulls is computed. The requirement that the dimensions of the considered pairs sum to $d-1$ can be made without loss of generality thanks to Theorem \ref{DHLOPP.sec.4.lem.1}. This procedure can be further sped up by doing the computation up to the symmetries of $[0,k]^d$. This allowed to determine the values of $\varepsilon(d,k)$ reported in Table~\ref{DHLOPP.sec.4.table.1}.

Let us provide two lattice $(d,k)$-polytopes that achieve each of the values of $\varepsilon(d,k)$ reported in that table. The smallest possible distance between disjoint lattice $(2,1)$-polytopes is achieved by the origin of $\mathbb{R}^2$ and the diagonal of $[0,1]^2$ that doesn't contain the origin. For all the other values of $k$ considered in Table~\ref{DHLOPP.sec.4.table.1} in the two dimensional case, $\varepsilon(2,k)$ is achieved by the point $(1,1)$ and the line segment with vertices $(0,0)$ and $(k,k-1)$. These configurations are illustrated at the top of Figure~\ref{DHLOPP.sec.4.fig.1} when $k$ is equal to $1$, $2$, and $3$.
\begin{table}[t]
\begin{tabular}{>{\centering}p{0.7cm}cccccc}
\multirow{2}{*}{$d$}&  \multicolumn{6}{c}{$k$}\\
\cline{2-7}
 & $1$ & $2$ & $3$ & $4$ & $5$ & $6$\\
\hline & \\[-1.1\bigskipamount]

$2$ & $\sqrt{2}$ & $\sqrt{5}$ & $\sqrt{13}$ & $5$ & $\sqrt{41}$ & $\sqrt{61}$\\
$3$ & $\sqrt{6}$ & $5\sqrt{2}$ & $\sqrt{299}$\\
$4$ & $3\sqrt{2}$ &\\
$5$ & $\sqrt{58}$ & \\[\medskipamount]
\end{tabular}
\caption{A few values of $1/\varepsilon(d,k)$.}\label{DHLOPP.sec.4.table.1}
\end{table}
\begin{figure}[b]
\begin{centering}
\includegraphics[scale=1]{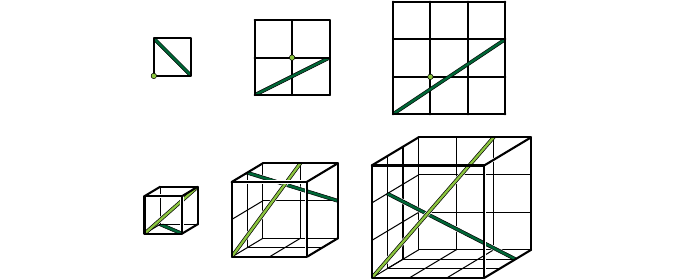}
\caption{Two polytopes $P$ and $Q$ whose distance is equal to $\varepsilon(2,k)$ (top) and $\varepsilon(3,k)$ (bottom) when $1\leq{k}\leq3$.}\label{DHLOPP.sec.4.fig.1}
\end{centering}
\end{figure}

Pairs of line segments whose distance are $\varepsilon(3,1)$, $\varepsilon(3,2)$, and $\varepsilon(3,3)$ are shown at the bottom of Figure~\ref{DHLOPP.sec.4.fig.1}. As already mentioned, $\varepsilon(3,1)$ is achieved by a diagonal of the cube $[0,1]^3$ and a diagonal of a square face. In addition, the line segment with vertices $(0,0,0)$ and $(1,2,2)$ is at distance $\varepsilon(3,2)$ of the segment with vertices $(0,1,2)$ and $(2,2,1)$. Similarly, the line segment with vertices $(0,0,0)$ and $(2,3,3)$ is at distance $\varepsilon(3,3)$ from the segment with vertices $(0,1,2)$ and $(3,2,0)$. In four dimensions, $\varepsilon(4,1)$ is achieved between the diagonal of the hypercube $[0,1]^4$ incident to the origin and the triangle with vertices $(0,0,0,1)$, $(0,1,1,0)$, and $(1,0,1,0)$. In five dimensions, $\varepsilon(5,1)$ is achieved between the diagonal of the hypercube $[0,1]^5$ incident to the origin and the tetrahedron with vertices $(0,0,0,1,1)$, $(0,0,1,0,1)$, $(0,1,1,1,0)$, and $(1,1,0,0,0)$. In the configurations we have just described, $\varepsilon(d,1)$ is always achieved by a diagonal of the hypercube and a $(d-2)$-dimensional simplex. It should be noted that these simplices are not standard simplices and these configurations are therefore different from the ones we used in the proof of Lemma \ref{DHLOPP.sec.3.lem.1}.
 
\section{Estimates in terms of encoding length}\label{DHLOPP.sec.1.5}

We finally turn our attention to bounding $d(P,Q)$ in the case when $P$ and $Q$ are rational polytopes. In practice, the parameter that quantifies the size of a rational polytope is its binary \emph{encoding input data length}. This parameter is the number $L$ of bits required to represent a rational polytope $P$ using either a system of linear inequalities whose set of solutions is $P$ or a set of points whose convex hull is $P$. Note however that $L$ depends on the choice of a representation for $P$ and may grow large in case the representation is redundant. Therefore, we will follow the terminology from \cite{Schrijver1998} and use the vertex and facet complexities of $P$ for our analysis. Let us introduce these two quantities. If $\alpha$ and $\beta$ are two relatively prime integers such that $\beta$ is positive, the \emph{size} of $\alpha/\beta$ is
$$
\mathrm{size}\biggl(\frac{\alpha}{\beta}\biggr)=1+\bigl\lceil{\log_2(|\alpha|+1)}\bigr\rceil+\bigl\lceil{\log_2(\beta+1)}\bigr\rceil\mbox{.}
$$

In turn, the size of a vector $a$ from $\mathbb{R}^d$ with rational coordinates is
$$
\mathrm{size}(a)=d+\sum_{i=1}^d\mathrm{size}(a_i)\mbox{.}
$$

In other words, the size of a vector with rational coordinates is the number of its coordinates plus the sum of the sizes of these coordinates.

If $P$ is a rational polytope, then its vertices have rational coordinates and the \emph{vertex complexity of $P$} is the smallest number $\nu(P)$ such that $\nu(P)$ is at least $d$ and the size of any vertex of $P$ is at most $\nu(P)$. Still under the assumption that $P$ is a rational polytope, the \emph{facet complexity} of $P$ is the smallest number $\varphi(P)$ such that $\varphi(P)$ is at least $d$ and there exists a family of vectors $a^1$ to $a^n$ from $\mathbb{Q}^d$ and a family of rational numbers $b_1$ to $b_n$ such that
$$
P=\bigl\{x\in\mathbb{R}^d:\forall\,i\in\{1,\ldots,d\},\,a^i\mathord{\cdot}x\leq{b_i}\bigr\}
$$
and for all $i$ satisfying $1\leq{i}\leq{n}$,
$$
\mathrm{size}(a^i)+\mathrm{size}(b_i)\leq{\varphi(P)}\mbox{.}
$$

The following is proven in \cite{Schrijver1998} (see Theorem 10.2 therein).

\begin{thm}\label{DHLOPP.sec.1.thm.2}
If $P$ is rational, then $\nu(P)\leq4d^2\varphi(P)$ and $\varphi(P)\leq4d^2\nu(P)$.
\end{thm}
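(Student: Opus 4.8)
The plan is to prove the two inequalities separately; in both directions the problem reduces to a single estimate, namely that the size of the determinant of a square rational matrix is bounded linearly in the sizes of its entries (a consequence of Hadamard's inequality after clearing denominators, as in \cite{Schrijver1998}). I would record this estimate as a lemma at the outset, since it is the only substantial ingredient, and then feed it into the two directions via Cramer's rule and via a determinantal formula for a hyperplane, respectively.

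For the inequality $\nu(P)\leq 4d^2\varphi(P)$, fix an inequality description $P=\{x\in\mathbb{R}^d:a^i\mathord{\cdot}x\leq b_i,\ 1\leq i\leq n\}$ attaining the facet complexity, so that $\mathrm{size}(a^i)+\mathrm{size}(b_i)\leq\varphi(P)$ for every $i$. Each vertex $v$ of $P$ is the unique point of $\mathbb{R}^d$ satisfying with equality the inequalities active at $v$; since $v$ is a vertex, these active constraints have rank $d$, so one can extract a nonsingular subsystem $A'x=b'$ whose $d$ rows are among the $a^i$ and whose right-hand sides are the corresponding $b_i$. By Cramer's rule, $v=\frac{1}{\mathrm{det}(A')}\bigl(\mathrm{det}(A'_1),\dots,\mathrm{det}(A'_d)\bigr)$, where $A'_j$ is $A'$ with column $j$ replaced by $b'$. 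All these matrices have entries of size at most $\varphi(P)$, so the determinant lemma bounds the size of each $\mathrm{det}(A'_j)$ and of $\mathrm{det}(A')$; summing the resulting bounds over the $d$ coordinates of $v$ and adding the term $d$ from the definition of $\mathrm{size}$ yields $\mathrm{size}(v)\leq 4d^2\varphi(P)$ once the constants are tracked. As this holds for every vertex, $\nu(P)\leq 4d^2\varphi(P)$.

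For the inequality $\varphi(P)\leq 4d^2\nu(P)$, write $P=\mathrm{conv}(v^1,\dots,v^m)$ with $\mathrm{size}(v^i)\leq\nu(P)$. When $P$ is full-dimensional, each facet is contained in a hyperplane through $d$ affinely independent vertices $v^{i_1},\dots,v^{i_d}$, and this hyperplane is the zero set of the determinant of the $(d+1)\times(d+1)$ matrix obtained by stacking the rows $(1,x)$ and $(1,v^{i_1}),\dots,(1,v^{i_d})$. Expanding this determinant along its first row exhibits it as $a\mathord{\cdot}x-b$ with $a$ and $b$ equal, up to sign, to $d\times d$ minors of a matrix whose entries are vertex coordinates or $1$; the determinant lemma then bounds $\mathrm{size}(a)+\mathrm{size}(b)$ by $4d^2\nu(P)$, and orienting each such inequality correctly (using an interior point of $P$ to fix the sign) produces a facet description of the required complexity. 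If instead $\mathrm{dim}(P)=e<d$, the same determinantal construction applied to $e+1$ affinely independent vertices produces $d-e$ linearly independent equations, of size at most $4d^2\nu(P)$, that cut out $\mathrm{aff}(P)$; adjoining these (as pairs of opposite inequalities) to the facet-defining inequalities obtained inside $\mathrm{aff}(P)$ gives the desired description of $P$.

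I expect the main obstacle to be the bookkeeping needed to obtain the constant exactly $4d^2$ rather than a cruder polynomial in $d$: this forces one to use the sharp Hadamard-type bound on the size of a determinant (instead of a lazy ``size of determinant $\leq$ twice the size of the matrix'') and to be careful about the additive $d$ appearing in the size of a vector. A secondary difficulty is the uniform handling of the non-full-dimensional case in both directions, where ``vertex $=$ unique solution of a $d\times d$ system'' and ``facet $=$ hyperplane through $d$ affinely independent vertices'' must be replaced by their analogues relative to $\mathrm{aff}(P)$, together with a separate accounting for the equations that define $\mathrm{aff}(P)$ itself.
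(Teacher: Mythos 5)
The paper does not prove this statement; it quotes it as Theorem 10.2 of Schrijver's book, and your proposal reconstructs exactly the argument given there: Cramer's rule on a nonsingular $d\times d$ active subsystem for the direction $\nu(P)\leq 4d^2\varphi(P)$, and determinantal hyperplanes through affinely independent vertices, together with affine-hull equations in the lower-dimensional case, for the direction $\varphi(P)\leq 4d^2\nu(P)$. So your route matches the cited source. One small misconception is worth flagging in your list of anticipated difficulties: the estimate $\mathrm{size}(\det M)\leq 2\,\mathrm{size}(M)$ that you dismiss as ``lazy'' is not an alternative to a ``sharp Hadamard-type bound'' --- it \emph{is} the bound obtained from Hadamard's inequality after clearing denominators, it is Schrijver's Theorem 3.2 (recorded in this paper as Theorem~\ref{DHLOPP.sec.1.thm.3} and used in the proof of Theorem~\ref{DHLOPP.sec.1.thm.4}), and it does produce the constant $4d^2$. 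Concretely, in the vertex-from-facet direction each row of the active $d\times d$ submatrix $A'$ has entry sizes summing to at most $\varphi(P)-d$, so $\mathrm{size}(A')\leq d^2+d(\varphi(P)-d)=d\,\varphi(P)$ and hence $\mathrm{size}(\det A')\leq 2d\,\varphi(P)$, likewise for each $A'_j$; since the size of a quotient of two rationals is at most the sum of their sizes minus one, each vertex coordinate has size at most $4d\,\varphi(P)-1$, and adding the additive $d$ in $\mathrm{size}(v)$ gives exactly $\mathrm{size}(v)\leq 4d^2\varphi(P)$. The place that genuinely requires care is the one you name last, the non-full-dimensional case on the facet side, where the inequalities must be assembled from both the facet hyperplanes within $\mathrm{aff}(P)$ and the equations cutting out $\mathrm{aff}(P)$ while keeping both within the stated size budget.
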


The size of a matrix can be defined in the same spirit as the size of a vector: if $M$ is a matrix with rational coefficients, then $\mathrm{size}(M)$ is the number of coefficients in $M$ plus the sum of the sizes of these coefficients.

The following statement is Theorem 3.2 from \cite{Schrijver1998}.

\begin{thm}\label{DHLOPP.sec.1.thm.3}
If $M$ is a square matrix with rational coefficients, then
$$
\mathrm{size}\bigl(\mathrm{det}(M)\bigr)\leq2\,\mathrm{size}(M)\mbox{.}
$$
\end{thm}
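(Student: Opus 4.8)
The plan is to clear denominators so as to reduce to an integer matrix, bound the determinant of that matrix by Hadamard's inequality, and then turn the resulting multiplicative estimates into additive estimates for the size.

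Write $M=(m_{ij})$ for an $n\times n$ matrix and put each entry in lowest terms as $m_{ij}=p_{ij}/q_{ij}$ with $q_{ij}\geq1$. Multiplying the $i$-th row of $M$ by $\tilde{q}_i=\prod_jq_{ij}$ produces an integer matrix $A$ with $\mathrm{det}(A)=(\prod_i\tilde{q}_i)\mathrm{det}(M)$, so that $\mathrm{det}(M)=\mathrm{det}(A)/q$ where $q=\prod_{i,j}q_{ij}$. Since the size of a rational number never exceeds the size of any, not necessarily reduced, fraction representing it, it suffices to bound $1+\lceil\log(|\mathrm{det}(A)|+1)\rceil+\lceil\log(q+1)\rceil$; and if $\mathrm{det}(M)=0$ the desired inequality is immediate, so we may assume $|\mathrm{det}(A)|\geq1$.

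For each entry, set $\ell_{ij}=(|p_{ij}|+1)(q_{ij}+1)$, which satisfies $\log\ell_{ij}\leq\mathrm{size}(m_{ij})-1$. On the one hand $q\leq\prod_{i,j}\ell_{ij}$. On the other hand the entry of $A$ in row $i$ and column $j$ is $p_{ij}\prod_{j'\neq j}q_{ij'}$, whose absolute value is at most $\prod_{j'}\ell_{ij'}$; hence Hadamard's inequality applied to the rows of $A$ gives $|\mathrm{det}(A)|\leq n^{n/2}\prod_{i,j}\ell_{ij}$. Taking logarithms (using $\log(t+1)\leq1+\log t$ for $t\geq1$) and accounting for the two ceilings and the leading $1$ then yields $\mathrm{size}(\mathrm{det}(M))\leq5+\tfrac{n}{2}\log n+2\sum_{i,j}(\mathrm{size}(m_{ij})-1)$.

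Finally, substituting $\mathrm{size}(M)=n^2+\sum_{i,j}\mathrm{size}(m_{ij})$ rearranges the last inequality into $\mathrm{size}(\mathrm{det}(M))\leq2\,\mathrm{size}(M)+5+\tfrac{n}{2}\log n-4n^2$, and $5+\tfrac{n}{2}\log n\leq4n^2$ holds for every $n\geq2$; the case $n=1$ is trivial since then $\mathrm{det}(M)=m_{11}$ and $\mathrm{size}(m_{11})\leq\mathrm{size}(M)$. The only delicate point is the bookkeeping: the factor $n^{n/2}$ produced by Hadamard's inequality, and the rounding built into the definition of $\mathrm{size}$, are wasteful, and one must check that they still fit inside the $2\,\mathrm{size}(M)$ budget. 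They do, because the surplus $2\sum_{i,j}1=2n^2$ coming from working with the products $\ell_{ij}$, together with the $n^2$ coming from the count of entries of $M$, more than absorbs $n^{n/2}$ and the additive constants — this is exactly what the constant $2$ in the statement pays for.
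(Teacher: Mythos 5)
Your proof is correct. Note that the paper itself offers no argument here: it simply quotes the statement as Theorem~3.2 of Schrijver's \emph{Theory of Linear and Integer Programming}, so there is nothing in-paper to compare against. Your route is the standard one for this kind of bound: clear denominators row by row to get an integer matrix $A$ with $\det(M)=\det(A)/q$, bound $|\det(A)|$ by Hadamard's inequality, and convert the multiplicative bounds on numerator and denominator into additive size bounds. Each step checks out. Passing to the unreduced fraction $\det(A)/q$ is legitimate because reducing a fraction can only shrink $|\alpha|$ and $\beta$, hence the size formula is monotone under taking any representative; the estimates $q\leq\prod_{i,j}\ell_{ij}$, $|A_{ij}|\leq\prod_{j'}\ell_{ij'}$, and $|\det(A)|\leq n^{n/2}\prod_{i,j}\ell_{ij}$ are all valid (the last since every entry of row $i$ is bounded by $L_i=\prod_{j'}\ell_{ij'}$, so the row's Euclidean norm is at most $\sqrt{n}\,L_i$); with $|\det(A)|\geq 1$ and $q\geq 1$ the two uses of $\log(t+1)\leq 1+\log t$ are justified; and $5+\tfrac{n}{2}\log n\leq 4n^2$ does hold for all $n\geq 2$ (e.g.\ via $\log n\leq n$), with $n=1$ handled directly. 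The only cosmetic slip is in the closing remark: what the $3n^2$ surplus has to absorb is $\tfrac{n}{2}\log n$, the \emph{logarithm} of the Hadamard factor, together with the additive constants, not $n^{n/2}$ itself; the inequality you actually verified is the correct one.
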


We now state two propositions that allow to bound the sizes of rational numbers or vectors. The first one is given as Exercise 1.3.5 in~\cite{GrotschelLovaszSchrijver1993}).
\begin{prop}\label{DHLOPP.sec.1.prop.1}
If $a$ and $b$ are two vectors from $\mathbb{Q}^d$, then
$$
\mathrm{size}\Biggl(\sum_{i=1}^da_i\Biggr)\leq2\sum_{i=1}^d\mathrm{size}(a_i)
$$
and
$$
\mathrm{size}(a\mathord{\cdot}b)\leq2\,\mathrm{size}(a)+2\,\mathrm{size}(b)\mbox{.}
$$
\end{prop}

The second proposition, whose proof is straightforward provides the smallest possible positive rational number with a given size.

\begin{prop}\label{DHLOPP.sec.1.prop.3}
If $x$ is a positive rational number, then
$$
\frac{4}{2^{\mathrm{size}(x)}}\leq{x}\leq\frac{2^{\mathrm{size}(x)}}{4}\mbox{.}
$$
\end{prop}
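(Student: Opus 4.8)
The plan is to reduce everything to two elementary estimates on integers. I would write $x$ in lowest terms as $\alpha/\beta$, where $\alpha$ and $\beta$ are relatively prime integers with $\beta$ positive. Since $x$ is positive, $\alpha$ is positive as well, so $\alpha\geq1$ and $\beta\geq1$, and by definition $\mathrm{size}(x)=1+\lceil\log(\alpha+1)\rceil+\lceil\log(\beta+1)\rceil$, where (as throughout this context) $\log$ denotes the base-$2$ logarithm. The observation I would isolate first is that $\lceil\log(\alpha+1)\rceil\geq1$ and $\lceil\log(\beta+1)\rceil\geq1$, simply because $\alpha+1\geq2$ and $\beta+1\geq2$.

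For the upper bound I would use $\beta\geq1$ to get $x\leq\alpha$, then $\alpha\leq\alpha+1\leq2^{\lceil\log(\alpha+1)\rceil}$. Since $\lceil\log(\beta+1)\rceil-1\geq0$, the exponent $\lceil\log(\alpha+1)\rceil$ is at most $\mathrm{size}(x)-2$, whence $x\leq2^{\mathrm{size}(x)-2}=2^{\mathrm{size}(x)}/4$. The lower bound is entirely symmetric: from $\alpha\geq1$ one gets $x\geq1/\beta$, then $\beta\leq\beta+1\leq2^{\lceil\log(\beta+1)\rceil}$ gives $x\geq2^{-\lceil\log(\beta+1)\rceil}$, and since $\lceil\log(\alpha+1)\rceil\geq1$ the exponent $-\lceil\log(\beta+1)\rceil$ is at least $2-\mathrm{size}(x)$, so $x\geq2^{2-\mathrm{size}(x)}=4/2^{\mathrm{size}(x)}$.

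There is no genuine obstacle here; as the statement says, the proof is straightforward, and the only point requiring a little care is the bookkeeping of the two ceilings and of the additive constants (the leading $+1$ in the definition of size together with the two $+1$'s inside the logarithms) that produce the factor $4=2^2$. One may also note that both bounds are asymptotically sharp, for instance along $x=2^m-1$ for the upper bound and along $x=1/(2^m-1)$ for the lower bound as $m$ tends to infinity, which explains why the constant cannot be improved.
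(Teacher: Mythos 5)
Your proof is correct. The paper does not actually include a proof of this proposition, dismissing it as straightforward, and the argument you give is the natural one: write $x=\alpha/\beta$ in lowest terms, observe $\lceil\log(\alpha+1)\rceil\geq1$ and $\lceil\log(\beta+1)\rceil\geq1$ since both integers are at least $1$, and bound $\alpha/\beta$ crudely above by $\alpha\leq2^{\lceil\log(\alpha+1)\rceil}$ and below by $1/\beta\geq2^{-\lceil\log(\beta+1)\rceil}$, which together with the definition of $\mathrm{size}$ yield the two powers of $2$ with the factor $4$ coming from the leading $+1$ and the unused ceiling.
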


We are ready to give lower bounds on the distance of two disjoint rational polytopes $P$ and $Q$ in terms of their binary encoding input data length. In the statement of the following theorem and its proof, we denote
$$
\nu(P,Q)=\mathrm{max}\bigl\{\nu(P),\nu(Q)\bigr\}
$$
and
$$
\varphi(P,Q)=\mathrm{max}\bigl\{\varphi(P),\varphi(Q)\bigr\}\mbox{.}
$$

\begin{thm}\label{DHLOPP.sec.1.thm.4}
If $P$ and $Q$ are disjoint rational polytopes, then
\begin{equation}\label{DHLOPP.sec.1.thm.4.eq.-2}
d(P,Q)\geq\frac{8}{2^{4\nu(P,Q)(2d)^4}}
\end{equation}
and
\begin{equation}\label{DHLOPP.sec.1.thm.4.eq.-1}
d(P,Q)\geq\frac{8}{2^{4\varphi(P,Q)(2d)^6}}\mbox{.}
\end{equation}
\end{thm}
\begin{proof}
In this proof, we consider the vectors $w^0$ to $w^r$ as well as the matrices $M$ and $M_1$ to $M_r$ that have been associated to $P$ and $Q$ at the beginning of Section \ref{DHLOPP.sec.1}. Recall that the vectors $w^0$ to $w^r$ are obtained by subtracting from one another two vertices of $P$, two vertices of $Q$, or a vertex of $P$ and a vertex of $Q$. As a consequence, it follows from the first inequality in the statement of Proposition \ref{DHLOPP.sec.1.prop.1}, that for every integer $i$ satisfying $0\leq{i}\leq{r}$,
$$
\mathrm{size}\bigl(w^i\bigr)\leq2\nu(P,Q)\mbox{.}
$$

In turn, for any two integers $i$ and $j$ satisfying $0\leq{i}\leq{j}\leq{r}$, it follows from Proposition \ref{DHLOPP.sec.1.prop.1} that the size of $w^i\mathord{\cdot}w^j$ can be bounded as
$$
\mathrm{size}\bigl(w^i\mathord{\cdot}w^j\bigr)\leq4\nu(P,Q)
$$
and by Theorem \ref{DHLOPP.sec.1.thm.3},
$$
\mathrm{size}\bigl(\mathrm{det}(M)\bigr)\leq8r^2\nu(P,Q)\mbox{.}
$$

In addition, the same inequality holds when replacing $M$ by any of the matrices $M_1$ to $M_r$. Now consider the vector
$$
a=\mathrm{det}(M)w^0-\sum_{i=1}^r\mathrm{det}(M_i)w^i
$$
and observe that $a_i$ is the scalar product between the vector from $\mathbb{R}^{r+1}$ whose coordinates are $\mathrm{det}(M)$ and $\mathrm{det}(M_1)$ to $\mathrm{det}(M_r)$ with the one whose coordinates are $w_i^0$ and $-w_i^1$ to $-w_i^r$. Therefore, by Proposition \ref{DHLOPP.sec.1.prop.1},
$$
\mathrm{size}(a)\leq2d(8r^2+1)(r+1)\nu(P,Q)
$$
and
$$
\mathrm{size}\bigl(w^0\mathord{\cdot}a\bigr)\leq4\bigl(d(8r^2+1)(r+1)+1\bigr)\nu(P,Q)\mbox{.}
$$

However, recall that $r$ is at most $d-1$. Hence,
\begin{equation}\label{DHLOPP.sec.1.thm.4.eq.1}
\mathrm{size}(a)\leq16d^4\nu(P,Q)
\end{equation}
and
$$
\mathrm{size}\bigl(w^0\mathord{\cdot}a\bigr)\leq32d^4\nu(P,Q)\mbox{.}
$$

In turn, according to Lemma \ref{DPP.sec.0.lem.1} and Proposition \ref{DHLOPP.sec.1.prop.3},
\begin{equation}\label{DHLOPP.sec.1.thm.4.eq.2}
d(P,Q)\geq\frac{4}{2^{32d^4\nu(P,Q)}\|a\|}\mbox{.}
\end{equation}

It also follows from (\ref{DHLOPP.sec.1.thm.4.eq.1}) and Proposition \ref{DHLOPP.sec.1.prop.1} that
$$
\mathrm{size}\bigl(\|a\|^2\bigr)\leq64d^4\nu(P,Q)
$$
and therefore, by Proposition \ref{DHLOPP.sec.1.prop.3},
\begin{equation}\label{DHLOPP.sec.1.thm.4.eq.3}
\|a\|^2\leq\frac{2^{64d^4\nu(P,Q)}}{4}\mbox{.}
\end{equation}

The desired lower bound on $d(P,Q)$ in terms of $\nu(P,Q)$ is obtained by combining (\ref{DHLOPP.sec.1.thm.4.eq.2}) with (\ref{DHLOPP.sec.1.thm.4.eq.3}). Finally, recall that Theorem \ref{DHLOPP.sec.1.thm.2} allows to upper bound $\nu(P,Q)$ by a function of $\varphi(P,Q)$. Using this bound on $\nu(P,Q)$ in the denominator of the right-hand side of (\ref{DHLOPP.sec.1.thm.4.eq.-2}) proves (\ref{DHLOPP.sec.1.thm.4.eq.-1}).
\end{proof}

We can also give upper bounds on the smallest possible distance of two disjoint rational polytopes in terms of their binary encoding input data length. Such bounds can be easily derived from Theorem \ref{DHLOPP.sec.2.thm.1}. It should be noted that these bounds no longer depend on $k$ or $d$. In particular, the following theorem is obtained using two $0/1$\nobreakdash-polytopes whose dimension gets arbitrarily large and even though we do not use the dependence on $k$, their distance decreases exponentially fast with their binary encoding input data length.

\begin{thm}\label{DHLOPP.sec.2.thm.3}
For any number $\alpha$ in $]0,1[$ there exist two disjoint rational polytopes $P$ and $Q$ with arbitrarily large $\nu(P,Q)$ and $\varphi(P,Q)$ such that
$$
d(P,Q)\leq\frac{1}{\displaystyle\biggl(\frac{\nu(P,Q)}{4}\biggr)^{\!(1-\alpha)\bigl(\frac{\nu(P,Q)}{4}\bigr)^\alpha}}
$$
and
$$
d(P,Q)\leq\frac{1}{\displaystyle\biggl(\frac{\varphi(P,Q)}{16}\biggr)^{\!\frac{1-\alpha}{3}\bigl(\frac{\varphi(P,Q)}{16}\bigr)^{\!\frac{\alpha}{3}}}}\mbox{.}
$$
\end{thm}
\begin{proof}
The theorem is proven by rewriting Theorem \ref{DHLOPP.sec.2.thm.1} in terms of the binary encoding input data length of $P$ and $Q$. Indeed, observe that, setting $k$ to $1$, $\nu(P)$ and $\nu(Q)$ are both bounded by $4d$ as the coordinates of the vertices of $P$ and $Q$ are equal to $0$ or to $1$ and the sizes of these two numbers are $2$ and~$3$. As a consequence, $\nu(P,Q)$ is at most $4d$ as well and $\varphi(P,Q)$ at most $16d^3$ according to Theorem~\ref{DHLOPP.sec.1.thm.2} which can be rewritten into
$$
d\geq\frac{\nu(P,Q)}{4}
$$
and
$$
d\geq\biggl(\frac{\varphi(P,Q)}{16}\biggr)^{\!\frac{1}{3}}\mbox{.}
$$

Bounding $d$ using these two inequalities in the denominator of the upper bound on $d(P,Q)$ from Theorem \ref{DHLOPP.sec.2.thm.1} proves the theorem.
\end{proof}

\medskip
\noindent\textbf{Acknowledgements.} This article is based upon work partially supported by the National Science Foundation under Grant No. DMS-1929284 while the authors were in residence at the Institute for Computational and Experimental Research in Mathematics in Providence, Rhode Island, during the Discrete Optimization: Mathematics,  Algorithms, and Computation semester program. The authors are grateful to Amitabh Basu, Santanu Dey, Yuri Faenza, Fr{\'e}d{\'e}ric Meunier, and Dmitrii Pasechnik for useful discussions and for pointing out relevant references. The first author is partially supported by the Natural Sciences and Engineering Research Council of Canada Discovery Grant program number RGPIN-2020-06846, the second author by a grant from the Israel Science Foundation and by the Dresner Chair at the Technion, and the third by the DFG Cluster of Excellence MATH+ (EXC-2046/1, project id 390685689) funded by the Deutsche Forschungsgemeinschaft (DFG).

\bibliography{KissingPolytopes}
\bibliographystyle{ijmart}

\end{document}